\newtheorem{theorem}{Theorem}[section]
\newtheorem{corollary}[theorem]{Corollary}
\newtheorem{definition}[theorem]{Definition}
\newtheorem{lemma}[theorem]{Lemma}
\newtheorem{proposition}[theorem]{Proposition}
\newenvironment{proof}[1][Proof]{\textbf{#1.} }{\hfill\rule{0.5em}{0.5em}}
{\catcode`\@=11\global\let\AddToReset=\@addtoreset
\AddToReset{equation}{section}

\AddToReset{theorem}{section}

\begin{document}
\title{Gradient estimates for singular quasilinear elliptic equations with measure data}
\author{
	{\bf Quoc-Hung Nguyen\thanks{ E-mail address: quochung.nguyen@sns.it}}\\[0.5mm]
	{\small   Scuola Normale Superiore, Centro Ennio de Giorgi, Piazza dei Cavalieri 3, I-56100
		Pisa, Italy.}\\}
\date{April 24, 2017}  
\maketitle
\begin{abstract}
In this paper,  we prove $L^q$-estimates for gradients of solutions to singular quasilinear elliptic equations with measure data
$$-\operatorname{div}(A(x,\nabla u))=\mu,$$
 in a bounded domain $\Omega\subset\mathbb{R}^{N}$, where $A(x,\nabla u)\nabla u \asymp |\nabla u|^p$,  $p\in (1,2-\frac{1}{n}]$ and $\mu$ is a Radon measure in $\Omega$.\\
MSC: primary: 35J62,35J75,35J92 secondary: 42B37.\\\\
\end{abstract}   
 \section{Introduction and main results} 
 In this article, we are concerned with the global weighted Lorentz space estimates  for gradients of  solutions to  quasilinear elliptic equations with measure data:
 \begin{equation}\label{5hh070120148}
                       \left\{
                                       \begin{array}
                                       [c]{l}%
                                       -\operatorname{div}(A(x,\nabla u))=\mu~~\text{in }\Omega,\\ 
                         u=0~~~~~~~\text{on}~~
                                                                                              \partial \Omega,
                                                                                                 \\                          
                                       \end{array}
                                       \right.  
                                       \end{equation}   where  $\Omega$ is a bounded open subset of $\mathbb{R}^{n}$, $n\geq2$,  a bounded Radon $\mu$ in $\Omega$ and  the nonlinearity  $A:\mathbb{R}^n\times \mathbb{R}^n\to \mathbb{R}^n$ is a Carath\'eodory vector valued function, i.e. $A$ is measurable in $x$ and continuous with respect to $\nabla u$ for a.e. $x$. Moreover,  $A$ satisfies 
                                       \begin{align}
                                       \label{condi1}| A(x,\xi)|\le \Lambda |\xi|^{p-1},~~~| D_\xi A(x,\xi)|\le \Lambda |\xi|^{p-2}
                                       \end{align}
                                        \begin{align}
                                       \label{condi2}  \langle D_\xi A(x,\xi)\eta,\eta\rangle\geq \Lambda^{-1} |\eta|^2|\xi|^{p-2},
                                       \end{align}
                                          for every $(\xi,\eta)\in \mathbb{R}^n\times \mathbb{R}^n\backslash\{(0,0)\}$ and a.e. $x\in \mathbb{R}^N$, where  $\Lambda$ is a  positive constant.\\
                                          In this paper, we consider singular case: 
                                          \begin{align}
                                          1<p\leq 2-\frac{1}{n}.
                                          \end{align}                   
                                          Our main result is that, for any $q>1$ and any $ w\in \mathbf{A}_q$ (the Muckenhoupt class, see below), a bounded Radon measure $\mu$ in $\Omega$, and under some additional conditions on  nonlinearity  $A$ and on the boundary of $\Omega$, then for any a (renormalized) solution $u$ of \eqref{5hh070120148}, we have  
                                          \begin{align}\label{es20}
                                          \int_{\Omega}|\nabla u|^q dx\leq C \int_{\Omega}\left( \mathbf{K}_1(\mu)\right)^{\frac{q}{p-1}}dx
                                          \end{align} 
                                          for any $q>0$,     
                                          where \begin{equation*}
                                          \mathbf{K}_1(\mu)(x):=\left\{ \begin{array}{l}
                                          \mathbf{M}_1[\mu]~~~\text{if}~~\frac{3n-2}{2n-1}<p\leq 2-\frac{1}{n},\\ 
                                          \left[\mathbf{M}_{\sigma}[|\mu|^\sigma]\right]^{\frac{1}{\sigma }}, ~~\text{ if }~1<p\leq \frac{3n-2}{2n-1}. \\ 
                                          \end{array} \right.
                                          \end{equation*}  
                                          with $\sigma>\frac{n}{p(2n-1)-2(n-1)}$,  $\mathbf{M}_\alpha$ is the fractional maximal function defined for each locally finite measure $\omega$ by
                                          \begin{align*}
                                          \mathbf{M}_\alpha[\mu](x):=\sup_{\rho>0}\frac{|\mu|(B_\rho(x))}{r^{n-\alpha}}~~\forall~~x\in \mathbb{R}^n, (\alpha\in (0,n)).
                                          \end{align*}
                               This estimate was proved in \cite{55Ph2} for regular case $p>2-\frac{1}{n}$. We recall that  $p>2-\frac{1}{n}$ is a necessary and sufficient condition for \eqref{5hh070120148} having a solution $u\in W_0^{1,1}(\Omega)$ for any Radon measure $\mu$. Similarly, $p>\frac{2n}{n+1}$ is a condition for $u\in L^1(\Omega)$. We remark that $
                               \frac{2n}{n+1}>\frac{3n-2}{2n-1}$  for any $n\geq 3$, thus we have $L^q-$estimate  \eqref{es20} without condition  $u\in L^1(\Omega)$. Furthermore,  the point-wise estimate for gradient of solutions to \eqref{5hh070120148} was obtained in \cite{55DuzaMing,Duzamin2,Mi2,55Mi0} but only for case $p>2-\frac{1}{n}$.
                               \medskip \\ 
  For our purpose, we need a condition on $\Omega$ which is expressed in the following way. We say that $\Omega$ is a $(\delta,R_0)-$Reifenberg flat domain for $\delta\in (0,1)$ and $R_0>0$ if for every $x\in\partial \Omega$ and every $r\in(0,R_0]$, there exists a system of coordinates $\{z_1,z_2,...,z_n\}$, which may depend on $r$ and $x$, so that in this coordinate system $x=0$ and that 
\begin{equation}
B_r(0)\cap \{z_n>\delta r\}\subset B_r(0)\cap \Omega\subset B_r(0)\cap\{z_n>-\delta r\}.
\end{equation}

We notice that this class of flat domains is rather wide since it includes $C^1$ domains, Lipschitz domains with sufficiently small Lipschitz constants and even fractal domains. Besides, it has many important roles in the theory of minimal surfaces and free boundary problems. This class appeared first in a work of Reifenberg (see \cite{55Re}) in the context of Plateau problem. Its properties can be found in \cite{55KeTo1,55KeTo2}.

We also require  that the nonlinearity $A$  satisfies a smallness condition of BMO type in the $x$-variable in the sense that $A(x,\zeta)$ satisfies a $(\delta,R_0)$-BMO condition for some $\delta, R_0>0$ if 
\begin{equation*}
[A]_{R_0}:=\mathop {\sup }\limits_{y\in \mathbb{R}^N,0<r\leq R_0}\fint_{B_r(y)}\Theta(A,B_r(y))(x)dx \leq \delta,
\end{equation*}         
where 
\begin{equation*}
\Theta(A,B_r(y))(x):=\mathop {\sup }\limits_{\zeta\in\mathbb{R}^N\backslash\{0\}}\frac{|A(x,\zeta)-\overline{A}_{B_r(y)}(\zeta)|}{|\zeta|^{p-1}},
\end{equation*}
and $\overline{A}_{B_r(y)}(\zeta)$ is denoted the average of $A(.,\zeta)$ over the ball $B_r(y)$, i.e,
\begin{equation*}
\overline{A}_{B_r(y)}(\zeta):=\fint_{B_r(y)}A(x,\zeta)dx=\frac{1}{|B_r(y)|}\int_{B_r(y)}A(x,\zeta)dx.
\end{equation*}                                                              
 It is easy to  see that the $(\delta,R_0)-$BMO  is satisfied when $A$ is continuous or has small jump discontinuities with respect to $x$.
We recall that                                  
A positive function $w\in L^1_{\text{loc}}(\mathbb{R}^{n})$ is called an $\mathbf{A}_{\infty}$ weight if there are two positive constants $C$ and $\nu$ such that
$$w(E)\le C \left(\frac{|E|}{|B|}\right)^\nu w(B),
$$
for all ball $B=B_\rho(x)$ and all measurable subsets $E$ of $B$. The pair $(C,\nu) $ is called the $\mathbf{A}_\infty$ constant of $w$ and is denoted by $[w]_{\mathbf{A}_\infty}$.  
It is well known that this class is the union of $\mathbf{A}_p$ for all $p\in (1,\infty)$, see \cite{55Gra}.\\  
If $w$ is a weight function belonging to $w\in \mathbf{A}_{\infty}$ and $E\subset \mathbb{R}^{n}$ a Borel set, $0<q<\infty$, $0<s\leq\infty$, the weighted Lorentz space $L^{q,s}_w(E)$  is the set of measurable functions $g$ on $E$ such that 
\begin{equation*}
||g||_{L^{q,s}_w(E)}:=\left\{ \begin{array}{l}
\left(q\int_{0}^{\infty}\left(\rho^qw\left(\{x\in E:|g(x)|>\rho\}\right)\right)^{\frac{s}{q}}\frac{d\rho}{\rho}\right)^{1/s}<\infty~\text{ if }~s<\infty, \\ 
\sup_{\rho>0}\rho \left( w\left(\{x\in E:|g(x)|>\rho\}\right)\right)^{1/q}<\infty~~\text{ if }~s=\infty. \\ 
\end{array} \right.
\end{equation*}              
Here we write $w(O)=\int_{O}w(x)dx$ for a measurable set $O\subset \mathbb{R}^{N}$.  Obviously,  
$
||g||_{L^{q,q}_w(E)}=||g||_{L^q_w(E)}
$,
thus $L^{q,q}_w(E)=L^{q}_w(E)$.               As usual, when $w \equiv 1$  we  write simply $L^{q,s}(E)$ instead of $L^{q,s}_w(E)$.              \\
In this paper, we denote   by $\mathfrak{M}_b(\Omega)$  the set of bounded Radon measures in $\Omega$.\medskip\\\\                       
We now state the main result of the paper.
                \begin{theorem} \label{101120143} Let $\mu\in \mathfrak{M}_b(\Omega)$.  Assume that $\mu\in L^\sigma(\Omega)$ for $\sigma>\frac{n}{p(2n-1)-2(n-1)}$ if $1<p\leq \frac{3n-2}{2n-1}$.  For any $w\in \mathbf{A}_{\infty}$, $0< q<\infty$, $0<s\leq\infty$ we find  $\delta=\delta(n,p,\sigma,\Lambda, q,s, [w]_{\mathbf{A}_{\infty}})\in (0,1)$ such that if $\Omega$ is  $(\delta,R_0)$-Reifenberg flat domain $\Omega$ and $[A]_{R_0}\le \delta$ for some $R_0>0$ then  for any renormalized solution $u$ of \eqref{5hh070120148} we have                            
                  \begin{equation}\label{101120144}
                              |||\nabla u|||_{L^{q,s}_w(\Omega)}\leq C ||[\mathbf{K}_1(\mu)]^{\frac{1}{p-1}}||_{L^{q,s}_w(\Omega)}.
                                       \end{equation} 
                                        Here $C$ depends only  on $n,p,\sigma,\Lambda,q,s, [w]_{\mathbf{A}_\infty}$ and $diam(\Omega)/R_0$ and   
                                        \begin{equation}\label{def}
                                        \mathbf{K}_1(\mu)(x):=\left\{ \begin{array}{l}
                                        \mathbf{M}_1[|\mu|]~~~~~~~~~\text{if}~~\frac{3n-2}{2n-1}<p\leq 2-\frac{1}{n},\\ 
                                        \left[\mathbf{M}_{\sigma}[|\mu|^\sigma]\right]^{\frac{1}{\sigma }} ~~\text{ if }~1<p\leq \frac{3n-2}{2n-1}. \\ 
                                        \end{array} \right.
                                        \end{equation}                
                                      \end{theorem}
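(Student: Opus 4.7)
The plan is to follow the by-now classical ``good-$\lambda$'' strategy of Mingione--Duzaar--Phuc, suitably adapted to the singular range $1<p\le 2-\frac{1}{n}$, and then convert the resulting distributional level-set inequality into a weighted Lorentz norm bound via the $\mathbf{A}_\infty$ machinery. Concretely, I would fix a renormalized solution $u$ of \eqref{5hh070120148} and, on every ball $B_{2R}(x_0)$ with $B_{4R}(x_0)\subset\Omega$, compare $u$ to the unique $\mathcal{A}$-harmonic replacement $v$ solving $-\mathrm{div}(A(x,\nabla v))=0$ in $B_{2R}(x_0)$ with $v=u$ on $\partial B_{2R}(x_0)$. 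Using the standard singular test-function techniques (approximating $u$ by truncations and exploiting the monotonicity inequality that follows from \eqref{condi2}), I would derive a comparison estimate of the type
\begin{equation*}
\Bigl(\fint_{B_{2R}}|\nabla u-\nabla v|^{\gamma}\,dx\Bigr)^{\!1/\gamma}\le C\,\bigl[\mathbf{K}_1(\mu)(x_0)\bigr]^{1/(p-1)},
\end{equation*}
valid for a suitable $\gamma\in(0,1)$; this is the step where the two definitions of $\mathbf{K}_1$ split, because below the threshold $p\le\frac{3n-2}{2n-1}$ the naive Riesz-potential bound $|\mu|(B_R)/R^{n-1}$ is not strong enough and has to be replaced by $R\,(\fint_{B_R}|\mu|^\sigma)^{1/\sigma}$, whence the $\mathbf{M}_\sigma[|\mu|^\sigma]^{1/\sigma}$ in the definition.

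Next I would freeze the coefficients: let $w$ solve $-\mathrm{div}(\overline{A}_{B_R}(\nabla w))=0$ in $B_R$ with $w=v$ on $\partial B_R$. The $(\delta,R_0)$-BMO hypothesis $[A]_{R_0}\le\delta$ together with standard interior energy estimates yields
\begin{equation*}
\fint_{B_{R}}|\nabla v-\nabla w|^{p}\,dx\le C\,\delta^{\beta}\fint_{B_{2R}}|\nabla v|^{p}\,dx,
\end{equation*}
for some $\beta>0$. Because $w$ now solves an autonomous problem, a Lipschitz regularity result (DiBenedetto, Lieberman, and the quantitative version used in \cite{55Ph2}) gives $\|\nabla w\|_{L^\infty(B_{R/2})}\le C\bigl(\fint_{B_R}|\nabla w|^\gamma\bigr)^{1/\gamma}$. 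At the boundary I would use the Reifenberg flatness of $\Omega$ (choosing $\delta$ small enough to fit within the regime of \cite{55KeTo1}) to obtain the analogous comparison after flattening by a thin slab, producing the same Lipschitz bound for the boundary approximation. Chaining the three estimates gives, on every ball $B_{R}(x_0)$ centered in $\overline{\Omega}$,
\begin{equation*}
\Bigl(\fint_{B_{R/2}}|\nabla u-\nabla\tilde w|^{\gamma}\Bigr)^{\!1/\gamma}\le \bigl[\varepsilon(\delta)+\varepsilon(\delta)\mathbf{M}(|\nabla u|^\gamma)(x_0)^{1/\gamma}\bigr]+C\mathbf{K}_1(\mu)(x_0)^{1/(p-1)},
\end{equation*}
where $\tilde w$ is the global comparison and $\varepsilon(\delta)\to 0$ as $\delta\to 0$.

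With the comparison in hand I would run the Calderón--Zygmund / Vitali covering argument of Wang-type: applied to $\mathbf{M}(|\nabla u|^\gamma)$ and $\mathbf{K}_1(\mu)^{\gamma/(p-1)}$ it yields the good-$\lambda$ inequality
\begin{equation*}
\bigl|\{\mathbf{M}(|\nabla u|^\gamma)>A\lambda\}\cap\{\mathbf{K}_1(\mu)^{\gamma/(p-1)}\le \varepsilon\lambda\}\bigr|\le C\,\varepsilon^{\kappa}\bigl|\{\mathbf{M}(|\nabla u|^\gamma)>\lambda\}\bigr|,
\end{equation*}
for all $\lambda>0$, with $A$ absolute and $\kappa>0$ depending on $n,p$. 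The Muckenhoupt structure of $w\in\mathbf{A}_\infty$ transfers this Lebesgue inequality to a weighted one: $w(\{\mathbf{M}(|\nabla u|^\gamma)>A\lambda,\ldots\})\le C\varepsilon^{\kappa\nu}w(\{\mathbf{M}(|\nabla u|^\gamma)>\lambda\})$. Choosing $\varepsilon$ small so that $C\varepsilon^{\kappa\nu}A^{q/\gamma}<\frac12$, integrating against $\rho^{s}\frac{d\rho}{\rho}$ on $(0,\infty)$ absorbs the self-term and produces exactly \eqref{101120144}.

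The principal obstacle is the comparison lemma in the deeply singular regime $p\le\frac{3n-2}{2n-1}$: in that range renormalized solutions need not lie in $W^{1,1}$, and neither Riesz-potential nor classical energy techniques apply directly. This is what forces the replacement of $\mathbf{M}_1[\mu]$ by $\mathbf{M}_\sigma[|\mu|^\sigma]^{1/\sigma}$ and the restriction $\sigma>\frac{n}{p(2n-1)-2(n-1)}$ (which is precisely the threshold making the RHS finite under testable bounds on $u$). All remaining steps, including the Reifenberg and BMO flattening and the $\mathbf{A}_\infty$-passage to weighted Lorentz norms, are adaptations of techniques that appear in \cite{55Ph2} and \cite{55DuzaMing} and should go through once the singular comparison is established.
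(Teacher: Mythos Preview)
Your overall strategy---comparison with an $A$-harmonic replacement, coefficient freezing, a Vitali/good-$\lambda$ inequality, then the $\mathbf{A}_\infty$ passage to weighted Lorentz norms---is exactly the architecture the paper uses. The gap is in your first comparison claim. You assert that ``standard singular test-function techniques'' yield
\[
\Bigl(\fint_{B_{2R}}|\nabla u-\nabla v|^{\gamma}\Bigr)^{1/\gamma}\le C\,[\mathbf{K}_1(\mu)(x_0)]^{1/(p-1)},
\]
but in the singular range $p<2$ the monotonicity of $A$ only controls $|\nabla(u-v)|^2(|\nabla u|+|\nabla v|)^{p-2}$, not $|\nabla(u-v)|^p$, so a self-term involving $\fint|\nabla u|^{\gamma}$ is unavoidable on the right-hand side. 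The paper's actual comparison estimate (its Lemmas~2.2 and~2.3, which it calls ``the main estimate of this paper'') reads
\[
\Bigl(\fint_{B_{2R}}|\nabla u-\nabla w|^{\gamma_0}\Bigr)^{1/\gamma_0}\le C_\varepsilon\,[\mathbf{T}(\mu,B_{2R})]^{1/(p-1)}+\varepsilon\Bigl(\fint_{B_{2R}}|\nabla u|^{\gamma_0}\Bigr)^{1/\gamma_0},
\]
with an \emph{arbitrary} $\varepsilon>0$, and proving it is where the real work lies: the paper tests with the non-standard function $T_{h,m}\bigl(|u-w|^{-\alpha}(u-w)\bigr)$ for a carefully chosen $\alpha\in(0,p/2)$, then runs a Marcinkiewicz-type interpolation in the spirit of \cite{bebo} together with Sobolev's inequality to close a bootstrap on the quantity $M=\int|\nabla(u-w)|\,|u-w|^{-\alpha/p}$. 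The threshold $p>\frac{3n-2}{2n-1}$ is precisely where an admissible $\alpha$ exists making the exponents match; below it one must trade the measure bound for an $L^\sigma$ bound, and the condition $\sigma>\frac{n}{p(2n-1)-2(n-1)}$ is again exactly what makes the bootstrap close.

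Your chained estimate does carry a self-term, but you attribute its coefficient entirely to the BMO/Reifenberg smallness $\varepsilon(\delta)$. In the paper the coefficient is $([A]_{R_0})^{\kappa}+\varepsilon$, where $\varepsilon$ is the free parameter from the comparison lemma and is independent of $\delta$; in the good-$\lambda$ absorption one must send \emph{both} $\delta$ and $\varepsilon$ to zero. None of this changes the skeleton of your argument, but the clean comparison you wrote down is not available, and you should expect the proof of the correct $\varepsilon$-version to be the substantive part rather than a citation of standard techniques.
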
                          
               We use the notion of renormalized solutions to \eqref{5hh070120148} in Theorem \ref{101120143}, several equivalent definitions of renormalized
               solutions were given in \cite{11DMOP}. We choose the following one. If $\mu\in\mathfrak{M}_b(\Omega)$, we denote by $\mu^+$ and $\mu^-$ respectively its positive and negative parts in the Jordan decomposition. We denote by $\mathfrak{M}_0(\Omega)$ the space of measures in $\Omega$ which are absolutely continuous with respect to the $c^{\Omega}_{1,p}$-capacity defined on a compact set $K\subset\Omega$ by
  \begin{equation*}
  c^{\Omega}_{1,p}(K)=\inf\left\{\int_{\Omega}{}|{\nabla \varphi}|^pdx:\varphi\geq \chi_K,\varphi\in C^\infty_c(\Omega)\right\}.
  \end{equation*}
  We also denote $\mathfrak{M}_s(\Omega)$ the space of measures in $\Omega$ with support on a set of zero $c^{\Omega}_{1,p}$-capacity. Classically, any $\mu\in\mathfrak{M}_b(\Omega)$ can be written in a unique way under the form $\mu=\mu_0+\mu_s$ where $\mu_0\in \mathfrak{M}_0(\Omega)\cap \mathfrak{M}_b(\Omega)$ and $\mu_s\in \mathfrak{M}_s(\Omega)$.
  It is well known  that any  $\mu_0\in \mathfrak{M}_0(\Omega)\cap\mathfrak{M}_b(\Omega)$ can be written under the form $\mu_0=f-\operatorname{div}(g)$ where $f\in L^1(\Omega)$ and $g\in L^{p'}(\Omega,\mathbb{R}^n)$.
  
  For $k>0$ and $s\in\mathbb{R}$, we set $T_k(s)=\max\{\min\{s,k\},-k\}$. If $u$ is a measurable function defined  in $\Omega$, finite a.e. and such that $T_k(u)\in W^{1,p}_{loc}(\Omega)$ for any $k>0$, there exists a measurable function $v:\Omega\to \mathbb{R}^n$ such that $\nabla T_k(u)=\chi_{|u|\leq k}v$ 
  a.e. in $\Omega$ and for all $k>0$. We define the gradient $\nabla u$ of $u$ by $v=\nabla u$. 
  
  \begin{definition} \label{derenormalized}Let $\mu=\mu_0+\mu_s\in\mathfrak{M}_b(\Omega)$. A measurable  function $u$ defined in $\Omega$ and finite a.e. is called a renormalized solution of \eqref{5hh070120148}
  	if $T_k(u)\in W^{1,p}_0(\Omega)$ for any $k>0$, $|{\nabla u}|^{p-1}\in L^r(\Omega)$ for any $0<r<\frac{n}{n-1}$, and $u$ has the property that for any $k>0$ there exist positive nonnegative Radon measure $\lambda_k^+$ and $\lambda_k^-$ belonging to $\mathfrak{M}_0(\Omega)$, respectively concentrated on the sets $u=k$ and $u=-k$, with the property that 
  	$\mu_k^+\rightharpoonup\mu_s^+$, $\mu_k^-\rightharpoonup\mu_s^-$ in the narrow topology of measures and such that
  	\[
  	\int_{\{|u|<k\}}\langle A(x,\nabla u),\nabla \varphi\rangle
  	dx=\int_{\{|u|<k\}}{\varphi d}{\mu_{0}}+\int_{\Omega}\varphi d\lambda_{k}%
  	^{+}-\int_{\Omega}\varphi d\lambda_{k}^{-},%
  	\]
  	for every $\varphi\in W^{1,p}_0(\Omega)\cap L^{\infty}(\Omega)$.
  	
  \end{definition}
The following two properties of renormalized solutions, one can found them in \cite{11DMOP}. 
\begin{proposition}\label{pro1} For any  a renormalized solution $u$ of \eqref{5hh070120148}, we have
	\begin{align*}
	||\nabla u||_{L^{\frac{(p-1)n}{n-1},\infty}(\Omega)}+	|| u||_{L^{\frac{(p-1)n}{n-p},\infty}(\Omega)}\leq C \left[|\mu|(\Omega)\right]^{\frac{1}{p-1}}.
	\end{align*}
\end{proposition}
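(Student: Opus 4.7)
The plan is to run the classical Boccardo--Gallou\"et argument, adapted to the renormalized framework of Definition 1.4. The basic energy input is obtained by testing the renormalized formulation against $\varphi=T_k(u)\in W_0^{1,p}(\Omega)\cap L^{\infty}(\Omega)$. Since $\nabla T_k(u)=\nabla u$ on $\{|u|<k\}$ and vanishes elsewhere, the left-hand side equals $\int_{\Omega}\langle A(x,\nabla T_k(u)),\nabla T_k(u)\rangle\,dx$, and by integrating the ellipticity \eqref{condi2} along the segment joining $0$ to $\xi$ one gets the pointwise bound $\langle A(x,\xi),\xi\rangle\geq c(p,\Lambda)|\xi|^p$. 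On the right-hand side $|T_k(u)|\leq k$, so
\[
c\int_{\Omega}|\nabla T_k(u)|^p\,dx \;\leq\; k\bigl(|\mu_0|(\Omega)+\lambda_k^+(\Omega)+\lambda_k^-(\Omega)\bigr)\;\leq\; C\,k\,|\mu|(\Omega),
\]
using that $\lambda_k^{\pm}(\Omega)$ is uniformly bounded by $|\mu_s^{\pm}|(\Omega)$ (up to an additive vanishing error) thanks to the narrow convergence $\lambda_k^{\pm}\rightharpoonup\mu_s^{\pm}$.

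From this single energy estimate both weak-type bounds follow by elementary distribution-function arguments. For $u$ itself, Sobolev's inequality applied to $T_k(u)\in W_0^{1,p}(\Omega)$ gives
\[
k^{p^{*}}\bigl|\{|u|>k\}\bigr|\;\leq\;\int_{\Omega}|T_k(u)|^{p^{*}}\,dx\;\leq\; C\bigl(k\,|\mu|(\Omega)\bigr)^{p^{*}/p},
\]
and since $p^{*}-p^{*}/p = n(p-1)/(n-p)$ and $p^{*}/p=n/(n-p)$, rearranging yields $|\{|u|>k\}|\leq C\,|\mu|(\Omega)^{n/(n-p)}k^{-n(p-1)/(n-p)}$, i.e.\ the weak $L^{n(p-1)/(n-p)}$ bound with norm $\leq C|\mu|(\Omega)^{1/(p-1)}$.

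For the gradient I would decompose, for any $\lambda,k>0$,
\[
\bigl\{|\nabla u|>\lambda\bigr\}\;\subset\;\bigl\{|u|>k\bigr\}\;\cup\;\bigl\{|\nabla T_k(u)|>\lambda\bigr\},
\]
apply Chebyshev to the second set using the energy estimate, and insert the previous bound on the first set to obtain
\[
\bigl|\{|\nabla u|>\lambda\}\bigr|\;\leq\; C\,|\mu|(\Omega)^{\frac{n}{n-p}}k^{-\frac{n(p-1)}{n-p}} + C\,|\mu|(\Omega)\,k\,\lambda^{-p}.
\]
Optimizing in $k$ (the minimizer is $k\sim \lambda^{(n-p)/(n-1)}|\mu|(\Omega)^{1/(n-1)}$) balances the two terms at $C|\mu|(\Omega)^{n/(n-1)}\lambda^{-n(p-1)/(n-1)}$, which is exactly the weak $L^{n(p-1)/(n-1)}$ estimate with norm $\leq C|\mu|(\Omega)^{1/(p-1)}$.

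The one genuinely delicate point is justifying that $T_k(u)$ is an admissible test function and controlling $\lambda_k^{\pm}(\Omega)$ uniformly in $k$; everything else is dimensional bookkeeping. The admissibility is built into Definition 1.4, and the bound on $\lambda_k^{\pm}$ follows from the narrow convergence assumption together with the fact that testing the renormalized equation against constants (approximated by smooth cutoffs) already provides $\lambda_k^{\pm}(\Omega)\leq |\mu|(\Omega) + o(1)$. Once those two technical facts are in hand, the estimate reduces to the Boccardo--Gallou\"et scheme above and requires no information beyond the structural hypotheses \eqref{condi1}--\eqref{condi2}.
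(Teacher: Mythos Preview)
The paper does not give its own proof of Proposition~\ref{pro1}; it records it as a known fact and cites \cite{11DMOP}. Your outline is precisely the Boccardo--Gallou\"et scheme that underlies the argument in \cite{11DMOP}, and the distribution-function computations leading to the two Marcinkiewicz bounds are correct.

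The only imprecision is in securing the energy inequality $\int_\Omega|\nabla T_k(u)|^p\,dx\leq Ck\,|\mu|(\Omega)$ with $C$ independent of $k$. Narrow convergence $\lambda_k^{\pm}\rightharpoonup\mu_s^{\pm}$ gives $\lambda_k^{\pm}(\Omega)\to\mu_s^{\pm}(\Omega)$, hence a bound for \emph{large} $k$, but not directly the constant $|\mu|(\Omega)$ for every $k>0$; and ``testing against constants'' is not available since admissible test functions must lie in $W_0^{1,p}(\Omega)$. The clean route is to use the renormalized formulation at a level $m>k$ with $\varphi=T_k(u)$ and send $m\to\infty$: since $\nabla T_k(u)$ is supported in $\{|u|\leq k\}\subset\{|u|<m\}$, the left-hand side equals $\int_\Omega\langle A(x,\nabla T_k(u)),\nabla T_k(u)\rangle\,dx$ for every such $m$, while on the right $\lambda_m^{\pm}$ is concentrated on $\{u=\pm m\}$ where $T_k(u)=\pm k$, so $\int T_k(u)\,d\lambda_m^{+}-\int T_k(u)\,d\lambda_m^{-}=k\bigl(\lambda_m^{+}(\Omega)+\lambda_m^{-}(\Omega)\bigr)\to k\,|\mu_s|(\Omega)$. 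This yields $\int_\Omega|\nabla T_k(u)|^p\,dx\leq Ck\,|\mu|(\Omega)$ for every $k>0$, and the rest of your argument goes through unchanged.
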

\begin{proposition} \label{pro2}Let $u$ be a  renormalized solution of \eqref{5hh070120148} with data $\mu\in L^1(\Omega)$. Let  $u_m$ be a solution of \eqref{5hh070120148} with data $\mu=\mu_k\in L^p(\Omega)$ such that $\mu_k\to \mu$ in $L^1(\Omega)$. Then, there exists a subsequence $(u_k)_k$  which converges 
to  $u$ in $L^{s}(\Omega)$ for all $s<\frac{(p-1)n}{n-p}$. Moreover, $\nabla u_k\to \nabla u$  in $L^q(\Omega)$ for all $q<\frac{(p-1)n}{n-1}$.
\end{proposition}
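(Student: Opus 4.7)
The plan is to implement the standard approximation scheme for renormalized solutions from \cite{11DMOP}: derive uniform a priori estimates on $(u_k)$, extract a subsequence that converges to some $v$ in an appropriate weak sense, use a monotonicity argument to upgrade this to a.e.\ convergence of the gradients, identify $v$ with $u$ via uniqueness of renormalized solutions with $L^1$ data, and finally promote the a.e.\ convergence to $L^s$/$L^q$ convergence using the Marcinkiewicz bounds of Proposition \ref{pro1}.

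First, since $\mu_k\to \mu$ in $L^1(\Omega)$, the total masses $|\mu_k|(\Omega)$ are uniformly bounded. Proposition \ref{pro1} applied to each $u_k$ therefore yields
\[
\|u_k\|_{L^{(p-1)n/(n-p),\infty}(\Omega)}+\|\nabla u_k\|_{L^{(p-1)n/(n-1),\infty}(\Omega)}\le C,
\]
uniformly in $k$. In particular $(u_k)$ is bounded in $W^{1,q}_0(\Omega)$ for every $q<(p-1)n/(n-1)$, so by Rellich--Kondrachov we may extract a (still denoted) subsequence and a function $v\in W^{1,q}_0(\Omega)$ with $u_k\to v$ weakly in $W^{1,q}_0(\Omega)$, strongly in $L^q(\Omega)$, and a.e.\ in $\Omega$.

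The core step, and the main obstacle, is to upgrade weak convergence of $(\nabla u_k)$ to a.e.\ convergence. Following the Boccardo--Murat technique as adapted to renormalized solutions in \cite{11DMOP}, I would test the equations solved by $u_k$ against truncated differences of the form $T_\eta(T_K(u_k)-T_K(v))$ and $T_\eta(u_k-T_K(v))$, using the energy identity $\int_\Omega |\nabla T_K(u_k)|^p\,dx\le CK$ (obtained from $T_K(u_k)$ as a test function) to control the terms supported where $|u_k|>K$. Letting first $k\to\infty$ and then $\eta\to 0$, the ellipticity \eqref{condi2} combined with the Leray--Lions monotonicity gives
\[
\lim_{k\to\infty}\int_{\{|u_k|\le K\}}\langle A(x,\nabla u_k)-A(x,\nabla T_K(v)),\nabla u_k-\nabla T_K(v)\rangle\,dx=0,
\]
which forces $\nabla u_k\to \nabla v$ a.e.\ on $\{|u_k|\le K\}$; since $K$ is arbitrary, $\nabla u_k\to \nabla v$ a.e.\ in $\Omega$. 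Passing to the limit in the weak formulation then shows that $v$ satisfies Definition \ref{derenormalized} with data $\mu$, and uniqueness of renormalized solutions with $L^1$ (hence $\mathfrak{M}_0$) data identifies $v=u$.

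With a.e.\ convergence in hand, the strong convergence follows from equi-integrability. For any $s<(p-1)n/(n-p)$ the sequence $(|u_k|^s)$ is bounded in the Marcinkiewicz space of exponent $(p-1)n/((n-p)s)>1$, hence equi-integrable; Vitali's theorem yields $u_k\to u$ in $L^s(\Omega)$. The same argument applied to $|\nabla u_k|^q$ with $q<(p-1)n/(n-1)$ gives $\nabla u_k\to \nabla u$ in $L^q(\Omega)$. Only the monotonicity/truncation step is substantive; the uniform bounds, compactness extraction, and Vitali upgrade are routine.
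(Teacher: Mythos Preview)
The paper does not supply its own proof of this proposition; immediately before Propositions~\ref{pro1} and~\ref{pro2} it simply records that both statements can be found in \cite{11DMOP}. Your outline is precisely the stability/approximation scheme of \cite{11DMOP}, so in spirit you are aligned with what the paper invokes.

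One technical point deserves care in the singular range $1<p\le 2-\frac{1}{n}$ treated here: then $(p-1)n/(n-1)\le 1$, so the exponent $q$ in your compactness step satisfies $q<1$, and the Rellich--Kondrachov theorem for $W^{1,q}_0$ is not available in the form you quote. The remedy used in \cite{11DMOP} is to work through truncations: the energy estimate $\int_\Omega|\nabla T_K(u_k)|^p\,dx\le CK$ (which you already record) makes $(T_K(u_k))_k$ bounded in $W^{1,p}_0(\Omega)$ for each fixed $K$, hence relatively compact in $L^p(\Omega)$; combined with the uniform Marcinkiewicz bound on $(u_k)$ this yields a subsequence that is Cauchy in measure and converges a.e.\ to some $v$. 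With that adjustment the remainder of your argument---the Boccardo--Murat/Leray--Lions monotonicity step for a.e.\ convergence of gradients, identification $v=u$ by uniqueness of renormalized solutions with $L^1$ data, and the Vitali upgrade via the Marcinkiewicz bounds---goes through exactly as you describe.
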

    \section{Interior estimates and boundary estimates for quasilinear equations}
In this section, we obtain certain local interior and boundary comparison
estimates that are essential to our development later. First let us consider
the interior ones. With $u\in W_{loc}^{1,p}(\Omega)$,
     For each ball $B_{2R}=B_{2R}(x_0)\subset\subset\Omega$, one considers the unique solution 
    \begin{equation*}
    w\in W_{0}^{1,p}(B_{2R})+u
    \end{equation*}
    to the following equation 
    \begin{equation}
    \label{111120146}\left\{ \begin{array}{l}
    - \operatorname{div}\left( {A(x,\nabla w)} \right) = 0 \;in\;B_{2R}, \\ 
    w = u\quad \quad on~~\partial B_{2R}, \\ 
    \end{array} \right.
    \end{equation}
    The following a variant of Gehring's lemma was proved in \cite[Theorem 6.7]{Giu}. 
    \begin{lemma} \label{111120147} Let $w$ be in \eqref{111120146}.
    	There exist  constants $\theta_1>p$ and $C$ depending only on $N,\Lambda$ such that the following estimate      
    	\begin{equation}\label{111120148}
    	\left(\fint_{B_{\rho/2}(y)}|\nabla w|^{\theta_1} dxdt\right)^{\frac{1}{\theta_1}}\leq C\left(\fint_{B_{\rho}(y)}|\nabla w|^{p-1} dx\right)^{\frac{1}{p-1}},
    	\end{equation}holds 
    	for all  $B_{\rho}(y)\subset B_{2R}$. 
    \end{lemma}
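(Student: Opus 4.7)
The plan is to combine a Caccioppoli inequality with a sub-$p$ Sobolev--Poincaré inequality to obtain a reverse Hölder inequality for $|\nabla w|$, apply Gehring's self-improving lemma, and then upgrade the right-hand side exponent from $p$ down to $p-1$ using the universal equivalence of low-order $L^q$-averages of the gradient of an $A$-harmonic function.

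First, I would test equation \eqref{111120146} with $(w-\bar w_{B_\rho(y)})\eta^p$ for a standard cutoff $\eta\in C_c^\infty(B_\rho(y))$ with $\eta\equiv 1$ on $B_{\rho/2}(y)$, and use the structural bounds \eqref{condi1}--\eqref{condi2} to derive the Caccioppoli estimate
$$\fint_{B_{\rho/2}(y)}|\nabla w|^p\,dx \le \frac{C}{\rho^p}\fint_{B_\rho(y)}|w-\bar w_{B_\rho(y)}|^p\,dx.$$
Applying the Sobolev--Poincaré inequality with $p_\ast:=np/(n+p)<p$ to the right-hand side gives the reverse Hölder bound
$$\left(\fint_{B_{\rho/2}(y)}|\nabla w|^p\,dx\right)^{1/p} \le C\left(\fint_{B_\rho(y)}|\nabla w|^{p_\ast}\,dx\right)^{1/p_\ast}.$$
Since $p_\ast<p$, Gehring's self-improving lemma produces some $\theta_1>p$ depending only on $n,p,\Lambda$ and a constant $C$ such that
$$\left(\fint_{B_{\rho/2}(y)}|\nabla w|^{\theta_1}\,dx\right)^{1/\theta_1} \le C\left(\fint_{B_\rho(y)}|\nabla w|^{p}\,dx\right)^{1/p}.$$

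To replace $p$ by $p-1$ on the right-hand side, I would rely on the interior $C^{1,\alpha}$ theory of DiBenedetto--Tolksdorf--Lieberman for singular quasilinear equations, which yields the local gradient bound
$$\|\nabla w\|_{L^\infty(B_{\rho/2}(y))} \le C\left(\fint_{B_\rho(y)}|\nabla w|^{p-1}\,dx\right)^{1/(p-1)}.$$
A standard covering/iteration on intermediate radii turns this $L^\infty$-bound into the equivalence of $(\fint_{B_{\rho/4}(y)}|\nabla w|^{q_1})^{1/q_1}$ and $(\fint_{B_\rho(y)}|\nabla w|^{q_2})^{1/q_2}$ for all $q_1,q_2>0$ on nested balls. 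Chaining this equivalence at $q_1=p$, $q_2=p-1$ with the Gehring estimate above, after a harmless radius relabelling, yields \eqref{111120148}.

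The main obstacle is bridging the gap between the Gehring improvement, which naturally lives at exponent $p_\ast$ and above, and the required exponent $p-1$, which in the singular regime $1<p\le 2-1/n$ satisfies $p-1\le 1-1/n<p_\ast$. Sobolev--Poincaré alone cannot reach such a small exponent, so the DiBenedetto--Lieberman gradient regularity for the homogeneous equation is essential; it is exactly at this step that the full strength of the ellipticity and growth conditions \eqref{condi1}--\eqref{condi2} is used. This is the content of Giusti's Theorem~6.7, which packages both the reverse Hölder/Gehring iteration and the low-exponent equivalence into a single statement.
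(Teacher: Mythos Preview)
The paper does not prove this lemma; it simply cites Giusti's Theorem~6.7. Your Caccioppoli + Sobolev--Poincar\'e + Gehring reconstruction up to exponent $\theta_1>p$ with right-hand side at exponent $p$ (or $p_\ast$) is correct and standard.

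The gap is in your final step. You invoke the DiBenedetto--Tolksdorf--Lieberman $C^{1,\alpha}$ theory to obtain an interior $L^\infty$ gradient bound for $w$, but that theory requires some continuity of $A(\cdot,\xi)$ in $x$. Here $A$ is only Carath\'eodory --- measurable in $x$ --- so $\nabla w$ need not be locally bounded; already in the linear case $p=2$, Meyers-type examples with bounded measurable coefficients show the gradient lies only in $L^{2+\varepsilon}$ for some small $\varepsilon$. The paper's own structure confirms this: Proposition~\ref{inter} introduces a \emph{separate} comparison function $v\in W^{1,\infty}$ (built by freezing coefficients, via \cite{55Ph2}) precisely because $w$ itself is not Lipschitz.

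The correct bridge from $p_\ast$ down to $p-1$ is pure interpolation and needs no pointwise regularity. From the reverse H\"older inequality at the pair $(p,p_\ast)$ valid on every sub-ball, interpolate the right-hand side between $L^p$ and $L^{p-1}$ by H\"older, use Young's inequality to put a small constant in front of the $L^p$-average, and absorb it by the standard iteration lemma on a chain of intermediate radii. This gives
\[
\left(\fint_{B_{\rho/2}}|\nabla w|^{p}\right)^{1/p}\le C\left(\fint_{B_\rho}|\nabla w|^{p-1}\right)^{1/(p-1)}
\]
directly, after which Gehring yields \eqref{111120148}. This absorption argument --- not an $L^\infty$ gradient bound --- is what the cited result in Giusti packages.
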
 
    The next lemma gives an estimate for $\nabla u-\nabla w$. This is the main estimate of this paper.
    \begin{lemma}\label{111120149}Let $w$ be in \eqref{111120146}. Assume that $\frac{3n-2}{2n-1}<p\leq 2-\frac{1}{n}$. There holds,
    	\begin{align}\label{1111201410+}
    	\left(	\fint_{B_{2R}}|\nabla u-\nabla w|^{\gamma_0}dx\right)^{\frac{1}{\gamma_0}}\leq C \left[\frac{|\mu|(B_{2R})}{R^{n-1}}\right]^{\frac{1}{p-1}}+C\frac{|\mu|(B_{2R})}{R^{n-1}}\left(	\fint_{B_{2R}}|\nabla u|^{\gamma_0}dx\right)^{\frac{2-p}{\gamma_0}}
    	\end{align}
    	for some $\frac{2-p}{2}\leq \gamma_0<\frac{(p-1)n}{n-1}\leq 1$.  In particular, for any $\varepsilon>0$, 
    	\begin{align}\label{1111201410}
    \left(	\fint_{B_{2R}}|\nabla u-\nabla w|^{\gamma_0}dx\right)^{\frac{1}{\gamma_0}}\leq C_\varepsilon \left[\frac{|\mu|(B_{2R})}{R^{n-1}}\right]^{\frac{1}{p-1}}+\varepsilon \left(	\fint_{B_{2R}}|\nabla u|^{\gamma_0}dx\right)^{\frac{1}{\gamma_0}}.
    	\end{align}  
    \end{lemma}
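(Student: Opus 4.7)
The plan is to combine two ingredients: (i) a weighted energy estimate coming from testing the equation satisfied by $v:=u-w$ against $T_k(v)$ together with the strict monotonicity of $A$ in the singular range, and (ii) a Marcinkiewicz-type bound for $v$ analogous to Proposition~\ref{pro1}. Concretely, I would subtract the weak formulation of \eqref{111120146} from the renormalized formulation of \eqref{5hh070120148} and use $T_k(v)\in W_0^{1,p}(B_{2R})\cap L^\infty$ as test function; this is admissible after the approximation in Proposition~\ref{pro2}, which disposes of the singular part of $\mu$. The structural assumption \eqref{condi2} yields, for $1<p<2$, the strict monotonicity
\begin{equation*}
\langle A(x,\xi)-A(x,\eta),\xi-\eta\rangle \ge c(|\xi|+|\eta|)^{p-2}|\xi-\eta|^2,
\end{equation*}
and hence the fundamental weighted energy bound
\begin{equation*}
\int_{\{|v|<k\}}(|\nabla u|+|\nabla w|)^{p-2}|\nabla v|^2\,dx \le Ck|\mu|(B_{2R}).
\end{equation*}

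On $\{|v|<k\}$, I would extract $|\nabla v|^{\gamma_0}$ from this weighted energy via the algebraic identity
\begin{equation*}
|\nabla v|^{\gamma_0}=\bigl[(|\nabla u|+|\nabla w|)^{p-2}|\nabla v|^2\bigr]^{\gamma_0/2}\,(|\nabla u|+|\nabla w|)^{\gamma_0(2-p)/2},
\end{equation*}
H\"older's inequality with conjugate exponents $2/\gamma_0$ and $2/(2-\gamma_0)$, and a second H\"older reduction allowed by the constraint $\gamma_0(2-p)/(2-\gamma_0)\le \gamma_0$ (equivalent to $\gamma_0\le p$, automatic since $\gamma_0<1<p$). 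Writing $f:=|\nabla u|+|\nabla w|$, this produces
\begin{equation*}
\frac{1}{|B_{2R}|}\int_{\{|v|<k\}}|\nabla v|^{\gamma_0}\,dx \le C\Bigl(\frac{k|\mu|(B_{2R})}{|B_{2R}|}\Bigr)^{\!\gamma_0/2}\Bigl(\fint_{B_{2R}}f^{\gamma_0}\Bigr)^{\!(2-p)/2}\!.
\end{equation*}
On the complement $\{|v|\ge k\}$, I would invoke the analog of Proposition~\ref{pro1} for the difference,
$\|\nabla v\|_{L^{(p-1)n/(n-1),\infty}(B_{2R})}\le C|\mu|(B_{2R})^{1/(p-1)}$,
which follows by adapting the Boccardo--Gallou\"et argument behind Proposition~\ref{pro1} directly to $v$ (the above weighted energy bound serves as the starting point), or alternatively by combining Proposition~\ref{pro1} for $u$ with the higher integrability of $w$ coming from Lemma~\ref{111120147}. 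Since $\gamma_0<(p-1)n/(n-1)$, a standard H\"older step then gives
$\frac{1}{|B_{2R}|}\int_{\{|v|\ge k\}}|\nabla v|^{\gamma_0}\,dx \le C[|\mu|(B_{2R})/R^{n-1}]^{\gamma_0/(p-1)}$.

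Adding the two contributions, taking the $1/\gamma_0$-th root, and choosing $k=[|\mu|(B_{2R})/R^{n-1}]\,R\,(\fint|\nabla u|^{\gamma_0})^{(2-p)/\gamma_0}$ balances the $\{|v|<k\}$ contribution precisely into $C[|\mu|(B_{2R})/R^{n-1}](\fint|\nabla u|^{\gamma_0})^{(2-p)/\gamma_0}$, which combined with the first term from $\{|v|\ge k\}$ gives \eqref{1111201410+} once $|\nabla w|$ is absorbed into $|\nabla u|$ on the right using $|\nabla w|\le|\nabla u|+|\nabla v|$ and the just-established bound on $|\nabla v|$. Inequality \eqref{1111201410} is then an immediate consequence of \eqref{1111201410+} via Young's inequality $ab^{2-p}\le\varepsilon b+C_\varepsilon a^{1/(p-1)}$ with conjugate exponents $1/(2-p)$ and $1/(p-1)$, applied to the second term. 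The main obstacle will be the rigorous derivation of the Marcinkiewicz analog for the difference $v$ in the renormalized framework (including the comparison of $\nabla w$ with $\nabla u$ in the absence of a global $W^{1,p}$-bound for $u$), together with the bookkeeping that ensures the admissible range $(2-p)/2\le\gamma_0<(p-1)n/(n-1)$ is non-empty precisely under the hypothesis $p>(3n-2)/(2n-1)$.
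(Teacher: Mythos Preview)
Your plan has a genuine gap at the step where you invoke a Marcinkiewicz bound for $\nabla v$ on $\{|v|\ge k\}$. The estimate $\|\nabla v\|_{L^{(p-1)n/(n-1),\infty}(B_{2R})}\le C|\mu|(B_{2R})^{1/(p-1)}$ does \emph{not} follow from the weighted energy bound in the singular range. The Boccardo--Gallou\"et machinery behind Proposition~\ref{pro1} requires the unweighted input $\int_{\{|v|<k\}}|\nabla v|^p\le Ck|\mu|$, and from your weighted estimate you can only reach this via
\[
\int_{\{|v|<k\}}|\nabla v|^p=\int_{\{|v|<k\}}g(u,w)^{p/2}f^{(2-p)p/2}\le\Bigl(Ck|\mu|(B_{2R})\Bigr)^{p/2}\Bigl(\int_{B_{2R}}f^p\Bigr)^{(2-p)/2},
\]
which drags in $\|f\|_{L^p(B_{2R})}$. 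Any Marcinkiewicz-type conclusion you derive will then carry a factor of $\|\nabla u\|_{L^p}$ (or $\|\nabla u\|_{L^{\gamma_0}}$ to a power strictly larger than the $(2-p)$ appearing in \eqref{1111201410+}), so the $\{|v|\ge k\}$ contribution cannot be reduced to $[|\mu|(B_{2R})/R^{n-1}]^{\gamma_0/(p-1)}$ with a constant independent of $u$. Your alternative route through Proposition~\ref{pro1} for $u$ fails for a different reason: it is a global estimate in $|\mu|(\Omega)$, not in $|\mu|(B_{2R})$, so it is useless for a local comparison. A symptom of the gap is that nothing in your argument singles out the threshold $p>(3n-2)/(2n-1)$; your admissible range $(2-p)/2\le\gamma_0<(p-1)n/(n-1)$ is nonempty for a different (weaker) lower bound on $p$.

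The paper's proof avoids the need for any a~priori Marcinkiewicz bound by testing with truncations of $|v|^{-\alpha}v$ for a carefully chosen $\alpha\in(0,p/2)$, obtaining $\int_{\{|v|<k\}}|v|^{-\alpha}g(u,w)\le Ck^{1-\alpha}|\mu|(B_{2R})$. A distribution-function argument then controls $|v|^{-\alpha}g(u,w)$ in $L^\gamma$ in terms of $|\mu|(B_{2R})$ and $\|v\|_{L^\beta}$, and the crucial closure is the Sobolev inequality applied to $|v|^{1-\alpha/p}\in W^{1,1}_0(B_{2R})$, which bounds $\|v\|_{L^\beta}$ (with $\beta=\tfrac{(p-\alpha)n}{p(n-1)}$) back by $M:=\int|\nabla v|\,|v|^{-\alpha/p}$. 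This yields a self-referential inequality for $M$ that can be absorbed; the constraints on $\alpha$ required for the H\"older exponents to be admissible are exactly what force $p>(3n-2)/(2n-1)$. In short, the weight $|v|^{-\alpha}$ is not cosmetic: it is precisely the device that replaces the missing $L^p$ gradient control in the singular range, and your proposal lacks a substitute for it.
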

\begin{proof} Set 
	\begin{equation}\label{TT}
T_{h,m}(s)=\left\{ \begin{array}{l}
T_m(s)~~~~~~~~~\text{if}~~|s|\geq 2h,\\ 
	2\operatorname{sgn}(s)(|s|-h) ~~\text{ if }~h<|s|<2h. \\ 
	0 ~~~~~~~~~\text{if}~~|s|\leq h,
	\end{array} \right.
	\end{equation}  
	for $m>2h>0$.                We have for any $\varphi\in W_0^{1,p}(B_{2R})$, 
\begin{align}\label{es0}
\int_{B_R}\langle A(x,\nabla u)-A(x,\nabla w),\nabla\varphi\rangle dx=\int_{B_R}\varphi d\mu.
\end{align}
 It is clear to see that we can choose $\varphi=T_{h,k^{1-\alpha}}(|u-w|^{-\alpha}(u-w))$ with $\alpha\in (-\infty,1)$ and $0<h< k^{1-\alpha}/2$ as test function of \eqref{es0},
 \begin{align*}
 \int_{B_{2R}\cap\{x:(2h)^{\frac{1}{1-\alpha}}<|u-w|<k\}}|u-w|^{-\alpha}g(u,w)dx\leq C k^{1-\alpha} |\mu|(B_{2R}),
 \end{align*}
 where 
 \begin{align*}
 g(u,w)=\frac{|\nabla (u-w)|^2}{(|\nabla w|+|\nabla u|)^{2-p}}.
 \end{align*}
 Using Fatou's Lemma yields,
 \begin{align}
 \int_{B_{2R}\cap\{x:|u-w|<k\}}|u-w|^{-\alpha}g(u,w)dx\leq C k^{1-\alpha} |\mu|(B_{2R}),
 \end{align}
 We now estimate $|u-w|^{-\alpha}g(u,w)$ in $L^\gamma(B_{2R})$. To do this, we employ a method in \cite{bebo} ( see also \cite{55Ph0}). Set $$\Phi(k,\lambda)=|\{x:|u-w|>k,|u-w|^{-\alpha}g(u,w)>\lambda\}\cap B_{2R}|.$$
Since $\lambda\mapsto \Phi(k,\lambda)$ is non-increasing,
\begin{align*}
\Phi(0,\lambda)&\leq \frac{1}{\lambda}\int_{0}^{\lambda}\Phi(0,s)ds\\&\leq 
\Phi(k,0)+\frac{1}{\lambda}\int_{0}^{\lambda}\Phi(0,s)-\Phi(k,s) ds
\\&=|\{x:|u-w|>k\}\cap B_{2R}|+\frac{1}{\lambda}\int_{0}^{\lambda}|\{x:|u-w|\leq k,|u-w|^{-\alpha}g(u,w)>s\}\cap B_{2R}| ds
\\&\leq k^{-\beta}||u-w||_{L^\beta(B_{2R})}^\beta+\frac{1}{\lambda}\int_{B_{2R}\cap\{x:|u-w|\leq k\}}|u-w|^{-\alpha}g(u,w) dx
\\&\leq k^{-\beta}||u-w||_{L^\beta(B_{2R})}^\beta+\frac{C k^{1-\alpha}}{\lambda} |\mu|(B_{2R}),
\end{align*} 
for any $\beta\geq 0$. Choosing $$k=\left[\frac{\lambda ||u-w||_{L^\beta(B_{2R})}^\beta}{|\mu|(B_{2R}) }\right]^{\frac{1}{1-\alpha+\beta}},$$
we find
\begin{align*}
\lambda^{\frac{\beta}{1-\alpha+\beta}}|\{x:|u-w|^{-\alpha}g(u,w)>\lambda\}\cap B_{2R}|\leq C |\mu|(B_{2R})^{\frac{\beta}{1-\alpha+\beta}} ||u-w||_{L^\beta(B_{2R})}^{\frac{\beta(1-\alpha)}{1-\alpha+\beta}}.
\end{align*}
By Holder's inequality, for $0<\gamma<\frac{\beta}{1-\alpha+\beta}$
\begin{align}\nonumber
\int_{B_{2R}}|u-w|^{-\alpha\gamma}g(u,w)^\gamma dx&\leq C |B_{2R}|^{1-\frac{\gamma(1-\alpha+\beta)}{\beta}}||[|u-w|^{-\alpha}g(u,w)]^\gamma||_{L^{\frac{\beta}{\gamma(1-\alpha+\beta)},\infty}(B_{2R})}\\&\leq 
C R^{n-\frac{n\gamma(1-\alpha+\beta)}{\beta}} |\mu|(B_{2R})^{\gamma}||u-w||_{L^\beta(B_{2R})}^{\gamma(1-\alpha)}.\label{es1}
\end{align}
By $1<p<2$, 
\begin{align*}
|\nabla (u-w)|\leq C \left( g(u,w)^{1/p}+g(u,w)^{1/2}|\nabla u|^{\frac{2-p}{2}}\right),
\end{align*}
 it follows 
\begin{align}
&M=\int_{B_{2R}}|\nabla (u-w)||u-w|^{-\frac{\alpha}{p}}\leq C\int_{B_{2R}} |u-w|^{-\frac{\alpha}{p}}g(u,w)^{1/p}+|u-w|^{-\frac{\alpha}{p}}g(u,w)^{\frac{1}{2}}|\nabla u|^{\frac{2-p}{2}}.\label{es6}
\end{align}
By Sobolev's inequality, 
\begin{align}\nonumber
\int_{B_{2R}}|u-w|^{\frac{(p-\alpha)n}{p(n-1)}}&\leq  C \left(\int_{B_{2R}}|\nabla |u-w|^{1-\frac{\alpha}{p}}|dx\right)^{\frac{n}{n-1}}\\& =CM^{\frac{n}{n-1}}\label{es2}.
\end{align}
Using Holder's inequality, 
\begin{align}\nonumber
\int_{B_{2R}}|\nabla (u-w)|^{\frac{(p-\alpha)n}{pn-\alpha}}&\leq \left(\int_{B_{2R}}|u-w|^{\frac{(p-\alpha)n}{p(n-1)}}\right)^{\frac{\alpha(n-1)}{pn-\alpha}}\left(\int_{B_{2R}}|\nabla (u-w)||u-w|^{-\frac{\alpha}{p}}dx\right)^{\frac{n(p-\alpha)}{pn-\alpha}}\\&\leq C M^{\frac{pn}{pn-\alpha}}\label{es9}.
\end{align}
Assume that \begin{align}\label{ine1}
1/p<\frac{\beta}{1-\alpha+\beta},~~\beta=\frac{(p-\alpha)n}{p(n-1)}.
\end{align}
Thus, we can apply \eqref{es1}  to $\gamma=1/p$, we have
\begin{align}\nonumber
\int_{B_{2R}} |u-w|^{-\frac{\alpha}{p}}g(u,w)^{1/p}&\leq C R^{n-\frac{n(1-\alpha+\beta)}{p\beta}} |\mu|(B_{2R})^{1/p}||u-w||_{L^\beta(B_{2R})}^{(1-\alpha)/p}\\& \leq C R^{n-\frac{n(1-\alpha+\beta)}{p\beta}} |\mu|(B_{2R})^{1/p} M^{\frac{1-\alpha}{p-\alpha}}\label{es3}.
\end{align}
Here we used \eqref{es2} for  the last inequality.
Assume that 
\begin{align}\label{es7}
\gamma_0=\frac{(p-\alpha)n}{pn-\alpha}>\frac{2-p}{2}.
\end{align} 
Using Holder's inequality for $\frac{2\gamma_0}{2\gamma_0+p-2}$ and $\frac{2\gamma_0}{2-p}$
\begin{align}\nonumber
&\int_{B_{2R}} |u-w|^{-\frac{\alpha}{p}}g(u,w)^{\frac{1}{2}}|\nabla u|^{\frac{2-p}{2}}\\&~~~~~~\leq \left(\int_{B_{2R}} |u-w|^{-\frac{2\alpha \gamma_0}{p(2\gamma_0+p-2)}}g(u,w)^{\frac{\gamma_0}{2\gamma_0+p-2}}\right)^{\frac{2\gamma_0+p-2}{2\gamma_0}} \left(\int_{B_{2R}} |\nabla u|^{\gamma_0}\right)^{\frac{2-p}{2\gamma_0}}\label{es5}.
\end{align}
Assume that 
\begin{align}\label{ine2}
\alpha<\frac{p}{2}, ~~\frac{\gamma_0}{2\gamma_0+p-2}<\frac{\beta}{1-\frac{2\alpha}{p} +\beta}.
\end{align} 
So, by  \eqref{es1}, 
\begin{align}\nonumber
\int_{B_{2R}} |u-w|^{-\frac{2\alpha \gamma_0}{p(2\gamma_0+p-2)}}g(u,w)^{\frac{\gamma_0}{2\gamma_0+p-2}}&\leq C R^{n-\frac{n\gamma_0(1-\frac{2\alpha}{p} +\beta)}{\beta(2\gamma_0+p-2)}} |\mu|(B_{2R})^{\frac{\gamma_0}{2\gamma_0+p-2}}||u-w||_{L^\beta(B_{2R})}^{\frac{\gamma_0(1-\frac{2\alpha}{p})}{2\gamma_0+p-2}}\\& \leq C R^{n-\frac{n\gamma_0(1-\frac{2\alpha}{p} +\beta)}{\beta(2\gamma_0+p-2)}} |\mu|(B_{2R})^{\frac{\gamma_0}{2\gamma_0+p-2}}M^{\frac{\gamma_0(p-2\alpha)}{(p-\alpha)(2\gamma_0+p-2)}}\label{es4}
\end{align}
Hence, combining \eqref{es6} and \eqref{es3}, \eqref{es5} \eqref{es4} yields, 
\begin{align}\nonumber
M&\leq C R^{n-\frac{n(1-\alpha+\beta)}{p\beta}} |\mu|(B_{2R})^{1/p} M^{\frac{1-\alpha}{p-\alpha}}+\\&+\left(R^{n-\frac{n\gamma_0(1-\frac{2\alpha}{p} +\beta)}{\beta(2\gamma_0+p-2)}} |\mu|(B_{2R})^{\frac{\gamma_0}{2\gamma_0+p-2}}M^{\frac{\gamma_0(p-2\alpha)}{(p-\alpha)(2\gamma_0+p-2)}}\right)^{\frac{2\gamma_0+p-2}{2\gamma_0}} \left(\int_{B_{2R}} |\nabla u|^{\gamma_0}\right)^{\frac{2-p}{2\gamma_0}}\label{es8}
\end{align}
provided  that \eqref{ine1}, \eqref{es7} and \eqref{ine2} are satisfied. \\
Put $\alpha_0=\frac{\alpha}{p}<1/2$, so $\beta=\frac{(1-\alpha_0)n}{n-1}$
\begin{align}
1/p<\frac{\beta}{1-\alpha+\beta}~~~\Longleftrightarrow~~~p>\frac{n(2-\alpha_0)-1}{n-\alpha_0},
\end{align}
\begin{align*}
\frac{\gamma_0}{2\gamma_0+p-2}<\frac{\beta}{1-\frac{2\alpha}{p} +\beta}~~~\Longleftrightarrow~~~p>\frac{n(2-\alpha_0)-1}{n-\alpha_0},
\end{align*}
and 
\begin{align*}
\gamma_0=\frac{(p-\alpha)n}{pn-\alpha}>\frac{2-p}{2} ~~~\Longleftrightarrow~~~p>\frac{2\alpha_0(n-1)}{n-\alpha_0}.
\end{align*}
Therefore, if  \begin{align*}
p>\frac{3n-2}{2n-1}
\end{align*} 
then  \eqref{ine1}, \eqref{es7} and \eqref{ine2} hold  for any \begin{align*}
1/2>\alpha_0>\frac{-1+(2-p)n}{n-p}.
\end{align*}
Therefore, using Holder's inequality, we get from \eqref{es8} that
\begin{align}\nonumber
M&\leq C \left[R^{n-\frac{n(1-\alpha+\beta)}{p\beta}} |\mu|(B_{2R})^{1/p}\right]^{\frac{p-\alpha}{p-1}} +\\&+\left(R^{n-\frac{n\gamma_0(1-\frac{2\alpha}{p} +\beta)}{\beta(2\gamma_0+p-2)}} |\mu|(B_{2R})^{\frac{\gamma_0}{2\gamma_0+p-2}}\right)^{\frac{(2\gamma_0+p-2)(p-\alpha)}{p\gamma_0}} \left(\int_{B_{2R}} |\nabla u|^{\gamma_0}\right)^{\frac{(p-\alpha)(2-p)}{p\gamma_0}}\label{es10}.
\end{align}
It follows from \eqref{es9} and \eqref{es10} that 
\begin{align}\nonumber
&(\int_{B_{2R}}|\nabla (u-w)|^{\gamma_0})^{\frac{p-\alpha}{p\gamma_0}} \leq C \left[R^{n-\frac{n(1-\alpha+\beta)}{p\beta}} |\mu|(B_{2R})^{1/p}\right]^{\frac{p-\alpha}{p-1}} +\\&+\left(R^{n-\frac{n\gamma_0(1-\frac{2\alpha}{p} +\beta)}{\beta(2\gamma_0+p-2)}} |\mu|(B_{2R})^{\frac{\gamma_0}{2\gamma_0+p-2}}\right)^{\frac{(2\gamma_0+p-2)(p-\alpha)}{p\gamma_0}} \left(\int_{B_{2R}} |\nabla u|^{\gamma_0}\right)^{\frac{(p-\alpha)(2-p)}{p\gamma_0}}
\end{align}
At this point, using Holder's inequality, we get \eqref{1111201410} 
with $\frac{2-p}{2}< \gamma_0<\frac{(p-1)n}{n-1}\leq 1$. The proof is complete. 
\end{proof}

\begin{lemma}\label{111120149'}Let $w$ be in \eqref{111120146}. Assume that $1<p\leq \frac{3n-2}{2n-1}$ and $\mu\in L^\sigma(B_{2R})$ for  $\sigma>\frac{n}{p(2n-1)-2(n-1)}$. There holds 
	\begin{align}\label{1111201410'+}
\left(	\fint_{B_{2R}}|\nabla u-\nabla w|^{\gamma_0}dx\right)^{\frac{1}{\gamma_0}}\leq C \left[\frac{\int_{B_{2R}}|\mu|^\sigma}{R^{n-\sigma}}\right]^{\frac{1}{\sigma (p-1)}}+C \left[\frac{\int_{B_{2R}}|\mu|^\sigma}{R^{n-\sigma}}\right]^{\frac{1}{\sigma}} \left(	\fint_{B_{2R}}|\nabla u|^{\gamma_0}dx\right)^{\frac{2-p}{\gamma_0}}
	\end{align}
  for some  $\frac{2-p}{2}\leq \gamma_0<\frac{(p-1)n}{n-1}< 1$. In particular, 
  \begin{align}\label{1111201410'}
  \left(	\fint_{B_{2R}}|\nabla u-\nabla w|^{\gamma_0}dx\right)^{\frac{1}{\gamma_0}}\leq C_\varepsilon \left[\frac{\int_{B_{2R}}|\mu|^\sigma}{R^{n-\sigma}}\right]^{\frac{1}{\sigma (p-1)}}+\varepsilon \left(	\fint_{B_{2R}}|\nabla u|^{\gamma_0}dx\right)^{\frac{1}{\gamma_0}}
  \end{align}
\end{lemma}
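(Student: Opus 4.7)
The plan is to run the argument of Lemma \ref{111120149} essentially verbatim, with the single substantive modification that the right-hand side of the energy identity is now estimated through Hölder's inequality with the $L^\sigma$ norm of $\mu$, rather than the total variation $|\mu|(B_{2R})$. I would test \eqref{es0} with the same truncation $\varphi=T_{h,k^{1-\alpha}}(|u-w|^{-\alpha}(u-w))$ for $\alpha<1$, $0<h<k^{1-\alpha}/2$. The ellipticity on the left again produces the lower bound in $|u-w|^{-\alpha}g(u,w)$, while on the right I would apply Hölder with exponents $\sigma$ and $\sigma'=\sigma/(\sigma-1)$ together with $|\varphi|\le k^{1-\alpha}$ and $|B_{2R}|^{1/\sigma'}\le C R^{n/\sigma'}$ to obtain
$$
\int_{B_{2R}\cap\{|u-w|<k\}} |u-w|^{-\alpha} g(u,w)\,dx \leq C k^{1-\alpha} \Pi,
$$
where $\Pi := R^{n(\sigma-1)/\sigma}\bigl(\int_{B_{2R}}|\mu|^\sigma\bigr)^{1/\sigma}$. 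The quantity $\Pi$ will play here the role that $|\mu|(B_{2R})$ played in Lemma \ref{111120149}, and the identity $\Pi/R^{n-1}=\bigl(\int_{B_{2R}}|\mu|^\sigma/R^{n-\sigma}\bigr)^{1/\sigma}$ matches the coefficients appearing in the statement.

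Next, the distribution-function argument leading to \eqref{es1} is kept unchanged with $\Pi$ replacing $|\mu|(B_{2R})$, producing
$$
\int_{B_{2R}} \bigl[|u-w|^{-\alpha}g(u,w)\bigr]^{\gamma}\,dx \leq C R^{n-n\gamma(1-\alpha+\beta)/\beta}\,\Pi^{\gamma}\,\|u-w\|_{L^\beta(B_{2R})}^{\gamma(1-\alpha)}
$$
for every $\beta\ge 0$ and $0<\gamma<\beta/(1-\alpha+\beta)$. The Sobolev embedding \eqref{es2}, the Hölder interpolation \eqref{es9}, and the splittings \eqref{es6}, \eqref{es5} all carry over verbatim with $\beta=(p-\alpha)n/(p(n-1))$, yielding the analog of \eqref{es10} with powers of $\Pi$ in place of powers of $|\mu|(B_{2R})$. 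The announced bound \eqref{1111201410'+} then follows by averaging over $B_{2R}$, and \eqref{1111201410'} is obtained from Young's inequality, exactly as at the end of Lemma \ref{111120149}.

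The hard part is to show that the admissible window for $\alpha_0=\alpha/p$ remains nonempty in the regime $1<p\le \frac{3n-2}{2n-1}$. In Lemma \ref{111120149} this window was $\frac{(2-p)n-1}{n-p}<\alpha_0<\frac{1}{2}$, and it collapses when $p\le\frac{3n-2}{2n-1}$ because the upper constraint $\alpha_0<\frac{1}{2}$ (i.e.\ $\alpha<p/2$) is forced by the validity of the auxiliary test function associated to the substitution $\alpha\mapsto 2\alpha/p$ used in \eqref{es5}--\eqref{es4}. The integrability assumption $\sigma>\frac{n}{p(2n-1)-2(n-1)}$ is precisely what relaxes this rigid upper bound: at the \eqref{es5} step one inserts a further Hölder factor controlled by $\|\mu\|_{L^\sigma}$, which contributes an extra power of $R^{n/\sigma'}$ and shifts the admissibility conditions \eqref{ine1}, \eqref{es7}, \eqref{ine2} so that they admit a common solution $\alpha_0$ exactly when $\sigma>\frac{n}{p(2n-1)-2(n-1)}$. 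Tracking the powers of $R$, $\Pi$, $\|u-w\|_{L^\beta}$, and $\|\nabla u\|_{L^{\gamma_0}}$ through this modified chain is the only genuinely delicate point; the rest of the argument is a mechanical adaptation of the proof of Lemma \ref{111120149}.
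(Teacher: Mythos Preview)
Your proposal has a genuine gap at precisely the point you flag as ``the only genuinely delicate point.'' Replacing $|\mu|(B_{2R})$ by $\Pi=R^{n/\sigma'}\|\mu\|_{L^\sigma(B_{2R})}$ via the crude bound $|\varphi|\le k^{1-\alpha}$ leaves the power of $k$ in the energy estimate equal to $k^{1-\alpha}$, and therefore the distribution-function argument reproduces \emph{exactly} the same constraint $\gamma<\beta/(1-\alpha+\beta)$ as in Lemma~\ref{111120149}. The admissibility conditions \eqref{ine1}--\eqref{ine2} are unchanged, and the window $\tfrac{(2-p)n-1}{n-p}<\alpha_0<\tfrac12$ is still empty when $p\le\tfrac{3n-2}{2n-1}$. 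Your suggestion to ``insert a further H\"older factor at the \eqref{es5} step'' cannot help: \eqref{es5} is a pure H\"older splitting of $|u-w|^{-\alpha/p}g^{1/2}|\nabla u|^{(2-p)/2}$ and does not involve $\mu$ at all; no modification there changes the exponent of $k$ that governs the constraints.

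The correct mechanism sits earlier, at the energy identity itself. Instead of the crude bound $|T_{k^{1-\alpha}}(|u-w|^{-\alpha}(u-w))|\le k^{1-\alpha}$, one interpolates
\[
\bigl|T_{k^{1-\alpha}}(|u-w|^{1-\alpha})\bigr|\le k^{\,1-\alpha-\beta/\sigma'}\,|u-w|^{\beta/\sigma'}
\]
(valid because $\min(a,b)\le a^\theta b^{1-\theta}$, choosing $\theta$ so that the $|u-w|$ exponent is $\beta/\sigma'$; this requires $\sigma'(1-\alpha)>\beta$). After H\"older with exponents $\sigma,\sigma'$ this yields
\[
\int_{\{|u-w|<k\}}|u-w|^{-\alpha}g(u,w)\,dx\le C\,k^{\,1-\alpha-\beta/\sigma'}\,\|u-w\|_{L^\beta}^{\beta/\sigma'}\,\|\mu\|_{L^\sigma}.
\]
The reduced power of $k$ relaxes the distribution-function constraint to $\gamma<\beta/(1-\alpha-\beta/\sigma'+\beta)$, and the analogues of \eqref{ine1}, \eqref{ine2} become $p>\bigl(n(1+(1-\alpha_0)/\sigma)-1\bigr)/(n-\alpha_0)$. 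This admits $\alpha_0<\tfrac12$ precisely when $\sigma>\tfrac{n}{p(2n-1)-2(n-1)}$, which is the hypothesis. The rest of the argument then proceeds as you outline.
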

\begin{proof} 	Put $\beta=\frac{(p-\alpha)n}{p(n-1)}$ and $\gamma_0=\frac{(p-\alpha)n}{pn-\alpha}$ with $\alpha\in (-\infty,\frac{p}{2})$. We have 
	\begin{align}
	\gamma_0>\frac{2-p}{2}.
	\end{align} 
	 It follows from  \eqref{es6}, \eqref{es2}, \eqref{es9} and \eqref{es5}  in the proof of Lemma \ref{111120149} that 
	\begin{align}\nonumber
	& M=||u-w||_{L^\beta(B_{2R})}^{\frac{p-\alpha}{p}}+||\nabla (u-w)||_{L^{\gamma_0}(B_{2R})}^{\frac{p-\alpha}{p}}\leq C\int_{B_{2R}} |u-w|^{-\frac{\alpha}{p}}g(u,w)^{1/p}\\&~~+ C\left(\int_{B_{2R}} |u-w|^{-\frac{2\alpha \gamma_0}{p(2\gamma_0+p-2)}}g(u,w)^{\frac{\gamma_0}{2\gamma_0+p-2}}\right)^{\frac{2\gamma_0+p-2}{2\gamma_0}} ||\nabla u||_{L^{\gamma_0}(B_{2R})}^{\frac{2-p}{2}},\label{es11}
	\end{align}
	where 
	\begin{align*}
	g(u,w)=\frac{|\nabla (u-w)|^2}{(|\nabla w|+|\nabla u|)^{2-p}}.
	\end{align*}
	 As in the proof of Lemma \eqref{111120149}, 
	\begin{align}
	\int_{B_{2R}\cap\{x:|u-w|<k\}}|u-w|^{-\alpha}g(u,w)dx\leq C \int_{B_{2R}}|T_{k^{1-\alpha}}(|u-w|^{-\alpha}(u-w))| |\mu|dx
	\end{align}
Let $\sigma\in (1,\min\{\frac{\beta}{\beta-1+\alpha},\frac{n}{p}\})$ and $\sigma'=\frac{\sigma}{\sigma-1}$. So, $\sigma'(1-\alpha)>\beta$.\\
Using Holder's inequality and the fact that
$$T_{k^{1-\alpha}}(|u-w|^{1-\alpha})|\leq k^{1-\alpha-\frac{\beta}{\sigma'}}|u-w|^{\frac{\beta}{\sigma'}},$$
we have
\begin{align*}
&\int_{B_{2R}\cap\{x:|u-w|<k\}}|u-w|^{-\alpha}g(u,w)dx\leq C ||T_{k^{1-\alpha}}(|u-w|^{1-\alpha})||_{L^{\sigma'}(B_{2R})} ||\mu||_{L^\sigma(B_{2R})}\\& ~~~\leq Ck^{1-\alpha-\frac{\beta}{\sigma'}} ||u-w||_{L^{\beta}(B_{2R})}^{\frac{\beta}{\sigma'}} ||\mu||_{L^\sigma(B_{2R})}.
\end{align*}
As the proof of \eqref{es1}, we also get that  for $0<\gamma<\frac{\beta}{1-\alpha-\frac{\beta}{\sigma'}+\beta}$
	\begin{align}
	\int_{B_{2R}}|u-w|^{-\alpha\gamma}g(u,w)^\gamma dx\leq 
	C R^{n-\frac{n\gamma(1-\alpha-\frac{\beta}{\sigma'}+\beta)}{\beta}}  ||\mu||_{L^\sigma(B_{2R})}^{\gamma}||u-w||_{L^\beta(B_{2R})}^{\gamma(1-\alpha)}.\label{es12}
	\end{align}
	Assume that \begin{align}\label{es15}
	1/p<\frac{\beta}{1-\alpha-\frac{\beta}{\sigma'}+\beta}.
	\end{align}
	Thus, we can apply \eqref{es12}  to $\gamma=1/p$, we have
	\begin{align}
	\int_{B_{2R}} |u-w|^{-\frac{\alpha}{p}}g(u,w)^{1/p}&\nonumber\leq C R^{n-\frac{n(1-\alpha-\frac{\beta}{\sigma'}+\beta)}{p\beta}}  ||\mu||_{L^\sigma(B_{2R})}^{1/p}||u-w||_{L^\beta(B_{2R})}^{(1-\alpha)/p}\\&\leq C R^{n-\frac{n(1-\alpha-\frac{\beta}{\sigma'}+\beta)}{p\beta}}  ||\mu||_{L^\sigma(B_{2R})}^{1/p}M^{\frac{1-\alpha}{p-\alpha}}\label{es13}
	\end{align}
Assume that
	\begin{align}\label{es15'}
	\alpha<\frac{p}{2},~~~ \frac{\gamma_0}{2\gamma_0+p-2}<\frac{\beta}{1-\frac{2\alpha}{p}-\frac{\beta}{\sigma'}+\beta}.
	\end{align} 
	So, by \eqref{es12}
	\begin{align}
	\int_{B_{2R}} |u-w|^{-\frac{2\alpha \gamma_0}{p(2\gamma_0+p-2)}}g(u,w)^{\frac{\gamma_0}{2\gamma_0+p-2}}&\leq C R^{n-\frac{n\gamma_0(1-\frac{2\alpha}{p} -\frac{\beta}{\sigma'}+\beta)}{\beta(2\gamma_0+p-2)}} ||\mu||_{L^\sigma(B_{2R})}^{\frac{\gamma_0}{2\gamma_0+p-2}}M^{\frac{\gamma_0(p-2\alpha)}{(p-\alpha)(2\gamma_0+p-2)}}\label{es16}.
	\end{align}
	Hence, combining \eqref{es11} and \eqref{es13}, \eqref{es16} yields, 
	\begin{align}\nonumber
	&M\leq CR^{n-\frac{n(1-\alpha-\frac{\beta}{\sigma'}+\beta)}{p\beta}}  ||\mu||_{L^\sigma(B_{2R})}^{1/p}M^{\frac{1-\alpha}{p-\alpha}}\\&~~+ C\left(R^{n-\frac{n\gamma_0(1-\frac{2\alpha}{p} -\frac{\beta}{\sigma'}+\beta)}{\beta(2\gamma_0+p-2)}} ||\mu||_{L^\sigma(B_{2R})}^{\frac{\gamma_0}{2\gamma_0+p-2}}M^{\frac{\gamma_0(p-2\alpha)}{(p-\alpha)(2\gamma_0+p-2)}}\right)^{\frac{2\gamma_0+p-2}{2\gamma_0}} ||\nabla u||_{L^{\gamma_0}(B_{2R})}^{\frac{2-p}{2}}\label{es17}
	\end{align}
	provided  that \eqref{es15} and \eqref{es15'} are satisfied. \\
	Put $\alpha_0=\frac{\alpha}{p}<1/2$, so $\beta=\frac{(1-\alpha_0)n}{n-1}$
	\begin{align*}
	1/p<\frac{\beta}{1-\alpha-\frac{\beta}{\sigma'}+\beta}~~~\Longleftrightarrow~~~p>\frac{n(1+\frac{1-\alpha_0}{\sigma})-1}{n-\alpha_0},
	\end{align*}
	and
	\begin{align*}
	\frac{\gamma_0}{2\gamma_0+p-2}<\frac{\beta}{1-\frac{2\alpha}{p}-\frac{\beta}{\sigma'}+\beta}~~~\Longleftrightarrow~~~p>\frac{n(1+\frac{1-\alpha_0}{\sigma})-1}{n-\alpha_0}.
	\end{align*}
	Therefore, if  \begin{align*}
\sigma>\frac{n}{p(2n-1)-2(n-1)} ~~\left(\Longleftrightarrow~~~ 	p>\frac{n(2+\frac{1}{\sigma})-2}{2n-1}~~\right),
	\end{align*} 
	then   \eqref{es15} and \eqref{es15'} hold  for any \begin{align*}
	1/2>\alpha_0>\frac{-1+(1+\frac{1}{\sigma}-p)n}{\frac{n}{\sigma}-p}.
	\end{align*}
	Therefore, the result follows from \eqref{es17} and Holder's inequality.  The proof is complete. 
\end{proof}
\begin{proposition} \label{inter} Assume that $\mu\in L^\sigma(\Omega)$ for $\sigma>\frac{n}{p(2n-1)-2(n-1)}$ if $1<p\leq \frac{3n-2}{2n-1}$. Let $\gamma_0$ be as in Lemma \ref{111120149} and Lemma  \ref{111120149'}. There exists $v\in W^{1,p}(B_R)\cap W^{1,\infty}(B_{R/2})$ such that for any $\varepsilon>0$, 
	\begin{align}
	||\nabla v||_{L^\infty(B_{R/2})}\leq C \left[\mathbf{T}(\mu, B_{2R})\right]^{\frac{1}{p-1}}+C (\fint_{B_{2R}}|\nabla u|^{\gamma_0})^{1/\gamma_0},
	\end{align}
	and
	\begin{align}
	(\fint_{B_{R}}|\nabla u-\nabla v|^{\gamma_0}dx)^{\frac{1}{\gamma_0}}\leq C_\varepsilon \left[\mathbf{T}(\mu,B_{2R})\right]^{\frac{1}{p-1}}+C(([A]_{R_0})^{\kappa} +\varepsilon)(\fint_{B_{2R}}|\nabla u|^{\gamma_0})^{1/\gamma_0},
	\end{align}
for some $\kappa\in (0,1)$, where
	\begin{equation*}
\mathbf{T}(\mu,B_{2R}):=\left\{ \begin{array}{l}
	\frac{|\mu|(B_{2R})}{R^{n-1}} ~~~~~~~~~\text{if}~~\frac{3n-2}{2n-1}<p\leq 2-\frac{1}{n},\\ 
	\left[\frac{\int_{B_{2R}}|\mu|^\sigma}{R^{n-\sigma}}\right]^{\frac{1}{\sigma }}~~\text{ if }~~1<p\leq \frac{3n-2}{2n-1}. \\ 
	\end{array} \right.
	\end{equation*}     
\end{proposition}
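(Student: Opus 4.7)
The plan is to construct $v$ via a two-step freezing procedure, comparing $u$ first to the $A(x,\cdot)$-harmonic replacement $w$ of \eqref{111120146} and then to the solution of the frozen-coefficient problem with averaged nonlinearity. Concretely, let $w\in W^{1,p}_0(B_{2R})+u$ solve \eqref{111120146}, and let $v\in W^{1,p}_0(B_R)+w$ solve
\[
-\operatorname{div}\bigl(\overline{A}_{B_R}(\nabla v)\bigr)=0\ \text{in}\ B_R,\qquad v=w\ \text{on}\ \partial B_R,
\]
where $\overline{A}_{B_R}$ is the average from the $(\delta,R_0)$-BMO definition. Since $\overline{A}_{B_R}$ is $x$-independent and satisfies the same structural bounds \eqref{condi1}--\eqref{condi2}, the classical regularity theory for homogeneous $p$-Laplacian-type equations (DiBenedicto/Tolksdorf) gives $v\in C^{1,\alpha}_{\mathrm{loc}}(B_R)$ together with the sup-bound
\[
\|\nabla v\|_{L^\infty(B_{R/2})}\le C\Bigl(\fint_{B_R}|\nabla v|^{\gamma_0}\,dx\Bigr)^{1/\gamma_0}.
\]

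Next I would bound $\|\nabla w-\nabla v\|_{L^{\gamma_0}(B_R)}$ in terms of $[A]_{R_0}$. This is a standard BMO-comparison estimate: testing the equations for $w$ and $v$ against $w-v\in W^{1,p}_0(B_R)$ and using the ellipticity \eqref{condi2} with the elementary inequality $|\nabla w-\nabla v|^p\le C(|\nabla w|+|\nabla v|)^{2-p}\langle A(x,\nabla w)-A(x,\nabla v),\nabla w-\nabla v\rangle$ yields, after a H\"older step and using $\Theta(A,B_R)\le$ BMO-type control,
\[
\Bigl(\fint_{B_R}|\nabla w-\nabla v|^{\gamma_0}\Bigr)^{1/\gamma_0}\le C\bigl([A]_{R_0}\bigr)^{\kappa}\Bigl(\fint_{B_{2R}}|\nabla u|^{\gamma_0}\Bigr)^{1/\gamma_0}
\]
for some $\kappa\in(0,1)$; the exponent $\kappa$ comes from interpolating between the energy inequality and the reverse H\"older/Gehring bound of Lemma \ref{111120147}, which also lets one pass from $\nabla w$ to $\nabla u$ on the right-hand side up to a term controlled by Lemma \ref{111120149} or \ref{111120149'}.

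Combining, the triangle inequality gives
\[
\Bigl(\fint_{B_R}|\nabla u-\nabla v|^{\gamma_0}\Bigr)^{1/\gamma_0}\le \Bigl(\fint_{B_R}|\nabla u-\nabla w|^{\gamma_0}\Bigr)^{1/\gamma_0}+\Bigl(\fint_{B_R}|\nabla w-\nabla v|^{\gamma_0}\Bigr)^{1/\gamma_0},
\]
and the two lemmas \ref{111120149}/\ref{111120149'} control the first term by $C_\varepsilon[\mathbf{T}(\mu,B_{2R})]^{1/(p-1)}+\varepsilon(\fint_{B_{2R}}|\nabla u|^{\gamma_0})^{1/\gamma_0}$, producing the second conclusion. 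For the $L^\infty$ bound, the sup-estimate for $v$ combined with $\|\nabla v\|_{L^{\gamma_0}(B_R)}\le \|\nabla u\|_{L^{\gamma_0}(B_{2R})}+\|\nabla w-\nabla u\|_{L^{\gamma_0}}+\|\nabla w-\nabla v\|_{L^{\gamma_0}}$ absorbs all comparison terms via Lemmas \ref{111120149}/\ref{111120149'}, giving the stated estimate with $\mathbf{T}(\mu,B_{2R})$.

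The main obstacle is the BMO comparison between $w$ and $v$ in the singular range $1<p\le 2-\tfrac{1}{n}$: one cannot directly test with $w-v$ in the natural $L^p$ way because $|\nabla w-\nabla v|^p$ is not directly controlled by the monotonicity. The standard device is to introduce the weight $(|\nabla w|+|\nabla v|)^{p-2}$, apply H\"older with exponents chosen so that the $L^{\gamma_0}$ norm appears naturally with $\gamma_0<\tfrac{(p-1)n}{n-1}<1$, and then use Gehring's Lemma \ref{111120147} to upgrade $\nabla w$ integrability above $p-1$ so that the BMO smallness $[A]_{R_0}$ can be factored out with a positive power $\kappa$. Making the algebra consistent with the constraints on $\gamma_0$ imposed by Lemmas \ref{111120149} and \ref{111120149'} is the delicate bookkeeping step.
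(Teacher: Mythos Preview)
Your approach is essentially the same as the paper's. The paper's proof is a three-line argument that invokes \cite[Lemma 2.3 and Corollary 2.4]{55Ph2} as a black box to produce $v$ together with the estimates
\[
\|\nabla v\|_{L^\infty(B_{R/2})}\le C\Bigl(\fint_{B_R}|\nabla w|^p\Bigr)^{1/p},\qquad
\fint_{B_R}|\nabla w-\nabla v|\,dx\le C\,[A]_{R_0}^{\kappa}\Bigl(\fint_{B_R}|\nabla w|^p\Bigr)^{1/p},
\]
and then combines these with Lemma \ref{111120147} (to pass from the $L^p$ average of $\nabla w$ down to the $L^{\gamma_0}$ average) and Lemmas \ref{111120149}/\ref{111120149'} (to replace $\nabla w$ by $\nabla u$ at the cost of the $\mathbf{T}(\mu,B_{2R})$ term). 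Your sketch simply unpacks the black box: the $v$ in \cite{55Ph2} is precisely the frozen-coefficient replacement you describe, and the BMO comparison you outline is how those lemmas are proved there. One small correction: in your displayed $w$--$v$ comparison you should retain a $C[A]_{R_0}^{\kappa}[\mathbf{T}(\mu,B_{2R})]^{1/(p-1)}$ contribution when you trade $\nabla w$ for $\nabla u$ on the right via Lemmas \ref{111120149}/\ref{111120149'}; this is harmless since it is dominated by the $C_\varepsilon[\mathbf{T}(\mu,B_{2R})]^{1/(p-1)}$ already present in the final estimate.
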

\begin{proof} By \cite[Lemma 2.3 and Corollary 2.4]{55Ph2}, there exists $v\in W^{1,p}(B_R)\cap W^{1,\infty}(B_{R/2})$ such that 
	\begin{align*}
	||\nabla v||_{L^\infty(B_{R/2})}\leq C \left(\fint_{B_R}|\nabla w|^p\right)^{1/p}
	\end{align*}
	and 
	\begin{align*}
	\fint_{B_{R}}|\nabla w-\nabla v|dx\leq C ([A]_{R_0})^{\kappa} \left(\fint_{B_R}|\nabla w|^p\right)^{1/p}
	\end{align*}
	 for some $\kappa\in (0,1).$ Combining these with \eqref{111120148} in Lemma \ref{111120147} and \eqref{1111201410} in Lemma \ref{111120149}, \eqref{1111201410'} in Lemma \ref{111120149'}, we get the results. The proof is complete. 
\end{proof}\medskip\\
      Next, we focus on the corresponding estimates near the boundary. We recall that $\Omega$ is $(\delta_0,R_0)-$ Reifenberg flat domain with $\delta_0<1/2$. Fix $x_0\in \Omega$ and $0<R<R_0/10$. With $u\in W_0^{1,p}(\Omega)$ being a solution to \eqref{5hh070120148}, 
      one considers the unique solution 
      \begin{equation*}
      w\in W_{0}^{1,p}(\Omega_{10R}(x_0))+u
      \end{equation*}
      to the following equation 
      \begin{equation}
      \label{111120146*}\left\{ \begin{array}{l}
      - \operatorname{div}\left( {A(x,\nabla w)} \right) = 0 \;in\;\Omega_{10R}(x_0), \\ 
      w = u\quad \quad on~~\partial \Omega_{10R}(x_0), \\ 
      \end{array} \right.
      \end{equation}
      Hereafter, the notation $\Omega_r(x)$ indicates the set $\Omega\cap B_r(x)$. 
      By \cite[Lemma 2.5]{55Ph0}, we have 
      \begin{lemma} \label{111120147*} Let $w$ be in \eqref{111120146*}.
      	There exist  constants $\theta_1>p$ and $C$ depending only on $N,p,\Lambda$ such that the following estimate      
      	\begin{equation}\label{111120148*}
      	\left(\fint_{B_{\rho/2}(y)}|\nabla w|^{\theta_1} dxdt\right)^{\frac{1}{\theta_1}}\leq C\left(\fint_{B_{3\rho}(y)}|\nabla w|^{p-1} dx\right)^{\frac{1}{p-1}},
      	\end{equation}holds 
      	for all  $B_{3\rho}(y)\subset B_{10R}(x_0)$. 
      \end{lemma}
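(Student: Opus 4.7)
The plan is to adapt the interior Gehring estimate of Lemma~\ref{111120147} to the boundary setting by combining a Caccioppoli-type inequality with a Sobolev--Poincar\'e estimate that exploits the zero trace of $w$ on $\partial\Omega\cap B_{10R}(x_0)$, and then applying a self-improving (Gehring) lemma to gain higher integrability. Throughout, the fact that $w-u\in W^{1,p}_0(\Omega_{10R}(x_0))$ together with $u=0$ on $\partial\Omega$ (from \eqref{5hh070120148}) ensures that the zero-extension $\tilde w$ of $w$ across $\partial\Omega\cap B_{10R}(x_0)$ belongs to $W^{1,p}_{\rm loc}(B_{10R}(x_0))$; this is the technical bridge between the interior and boundary cases.

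First, I would establish a Caccioppoli-type inequality. For a ball $B_\rho(y)$ with $B_{3\rho}(y)\subset B_{10R}(x_0)$, choose a standard cutoff $\eta$ supported in $B_{2\rho}(y)$ with $\eta\equiv 1$ on $B_\rho(y)$, and test \eqref{111120146*} with $\eta^p(w-c)$ (extended by $-c$ outside $\Omega$ when the support meets $\partial\Omega$). Using \eqref{condi1}--\eqref{condi2} and Young's inequality produces
\begin{equation*}
\int_{B_\rho(y)\cap \Omega}|\nabla w|^p\,dx\leq C\rho^{-p}\int_{B_{2\rho}(y)\cap \Omega}|w-c|^p\,dx.
\end{equation*}
For interior balls $B_{2\rho}(y)\subset \Omega_{10R}(x_0)$ I pick $c=\fint_{B_{2\rho}(y)} w$, while for balls meeting $\partial\Omega$ I pick $c=0$; the latter is legitimate because $\tilde w$ vanishes on $B_{2\rho}(y)\setminus \Omega$.

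Next, I would bound the right-hand side by a Sobolev--Poincar\'e inequality at a lower exponent $q<p$ (e.g.\ $q=\frac{np}{n+p}$). For interior balls this is classical. For boundary balls the Reifenberg flatness with $\delta_0<1/2$ yields the uniform measure-density bound $|B_{2\rho}(y)\setminus \Omega|\geq c_n|B_{2\rho}(y)|$, so $\tilde w$ vanishes on a subset of fixed proportion and the Sobolev--Poincar\'e inequality for functions vanishing on a set of positive measure applies:
\begin{equation*}
\Bigl(\fint_{B_{2\rho}(y)}|\tilde w|^p\Bigr)^{1/p}\leq C\rho\Bigl(\fint_{B_{2\rho}(y)}|\nabla \tilde w|^q\Bigr)^{1/q}.
\end{equation*}
Combined with the Caccioppoli estimate this gives the scale-invariant reverse H\"older inequality
\begin{equation*}
\Bigl(\fint_{B_\rho(y)}|\nabla w|^p\,dx\Bigr)^{1/p}\leq C\Bigl(\fint_{B_{3\rho}(y)}|\nabla w|^q\,dx\Bigr)^{1/q},
\end{equation*}
with a constant $C$ depending only on $n,p,\Lambda$ and the Reifenberg constant.

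Finally I would apply the Giaquinta--Modica version of Gehring's lemma to upgrade $p$ on the left to some $\theta_1>p$. To reach the stated right-hand side with exponent $p-1$ rather than $q$, I iterate the scheme once more: for $A$-harmonic $w$ one has a family of reverse H\"older inequalities at any exponent below $p$ down to $p-1$ (obtained by choosing a smaller $q$ in the Sobolev--Poincar\'e step, which is allowed in the singular range $p\leq 2-\frac{1}{n}$ since $\frac{np}{n+p}<1$ is accessible), and a second Gehring iteration absorbs the resulting $L^{p-1}$-average on the right. The main obstacle is the geometric step verifying that $\delta_0<1/2$ yields uniform measure-density of $B_\rho(y)\setminus \Omega$ and hence a Sobolev--Poincar\'e constant independent of $y$ and $\rho$; once this is in hand the remainder of the argument parallels Lemma~\ref{111120147}.
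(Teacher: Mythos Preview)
The paper does not prove this lemma; it simply invokes \cite[Lemma 2.5]{55Ph0}. Your proposal supplies the standard argument underlying that reference---Caccioppoli inequality, zero-extension of $w$ across $\partial\Omega$ (legitimate because $u\in W^{1,p}_0(\Omega)$ forces $w=0$ on $\partial\Omega\cap B_{10R}(x_0)$), Sobolev--Poincar\'e via the Reifenberg measure-density bound $|B\setminus\Omega|\geq c_n|B|$ for balls centred on $\partial\Omega$, and Gehring self-improvement---and this outline is correct and matches what is done in the cited source and, for the interior analogue, in \cite[Theorem~6.7]{Giu}.

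One step is stated imprecisely. To reach the exponent $p-1$ on the right you assert that one may pick a smaller $q$ in the Sobolev--Poincar\'e step ``since $\frac{np}{n+p}<1$'' in the singular range. This inequality fails for $n\geq 3$ (e.g.\ $n=3$, $p=\tfrac{5}{3}$ gives $\frac{np}{n+p}=\tfrac{15}{14}$), and in any case the Sobolev--Poincar\'e pairing $q\mapsto q^*=\frac{nq}{n-q}$ forces $q\geq \frac{np}{n+p}$ to land at $q^*\geq p$; one cannot simply set $q=p-1$. The correct mechanism for lowering the right-hand exponent from $q_0=\frac{np}{n+p}$ to any $s\in(0,q_0)$ is the self-improving property of reverse H\"older classes: once $|\nabla w|$ satisfies a scale-invariant reverse H\"older inequality, a covering/iteration argument (as in \cite{Giu}) yields $\bigl(\fint_{B_{\rho/2}}|\nabla w|^{\theta_1}\bigr)^{1/\theta_1}\leq C\bigl(\fint_{B_{3\rho}}|\nabla w|^{s}\bigr)^{1/s}$ for every $s>0$, in particular $s=p-1$. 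With this correction your sketch is complete.
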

  As Lemma  \ref{111120149} and \ref{111120149'},  we also get
  \begin{lemma}\label{111120149"} Assume that $p>\frac{3n-2}{2n-1}$. Let $w$ be in \eqref{111120146*} and $\gamma_0$ be in Lemma  \ref{111120149}. There holds 
  	\begin{align}\nonumber
  &	\left(	\fint_{B_{10R}(x_0)}|\nabla (u-w)|^{\gamma_0}dx\right)^{\frac{1}{\gamma_0}}\\&~~~~\leq C\left[\frac{|\mu|(B_{10R}(x_0))}{R^{n-1}}\right]^{\frac{1}{p-1}}+C \frac{|\mu|(B_{10R}(x_0))}{R^{n-1}}\left(	\fint_{B_{10R}(x_0)}|\nabla u|^{\gamma_0}dx\right)^{\frac{2-p}{\gamma_0}}\label{1111201410"+}.
  	\end{align}
   In particular, for any $\varepsilon>0$,
  	 	\begin{align}\label{1111201410"}
  	 \left(	\fint_{B_{10R}(x_0)}|\nabla (u- w)|^{\gamma_0}dx\right)^{\frac{1}{\gamma_0}}\leq C_\varepsilon \left[\frac{|\mu|(B_{10R}(x_0))}{R^{n-1}}\right]^{\frac{1}{p-1}}+\varepsilon \left(	\fint_{B_{10R}(x_0)}|\nabla u|^{\gamma_0}dx\right)^{\frac{1}{\gamma_0}}.
  	 \end{align}
  \end{lemma}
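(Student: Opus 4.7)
The plan is to run the proof of Lemma \ref{111120149} almost verbatim, with the only genuine change being that the Sobolev inequality on a ball is replaced by Sobolev's inequality applied to the zero-extension of $u-w$ from $\Omega_{10R}(x_0)$ to $B_{10R}(x_0)$. Since $u\in W_0^{1,p}(\Omega)$ and $w-u\in W_0^{1,p}(\Omega_{10R}(x_0))$ by the definition of $w$, the difference $u-w$ has vanishing trace on $\partial\Omega_{10R}(x_0)$, so its zero-extension to $B_{10R}(x_0)$ belongs to $W_0^{1,p}(B_{10R}(x_0))$ with the standard Sobolev constant depending only on $n$.

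I would start by writing the difference equation weakly: for every $\varphi\in W_0^{1,p}(\Omega_{10R}(x_0))$,
$$\int_{\Omega_{10R}(x_0)}\langle A(x,\nabla u)-A(x,\nabla w),\nabla\varphi\rangle dx =\int_{\Omega_{10R}(x_0)}\varphi d\mu,$$
and test with $\varphi=T_{h,k^{1-\alpha}}(|u-w|^{-\alpha}(u-w))$ as in the interior case; this lies in $W_0^{1,p}(\Omega_{10R}(x_0))\cap L^\infty$ because $T_{h,m}$ in \eqref{TT} is a bounded Lipschitz truncation vanishing near $0$. Ellipticity \eqref{condi2}, Fatou's lemma, and the bound $\|\varphi\|_\infty\le k^{1-\alpha}$ then yield
$$\int_{\Omega_{10R}(x_0)\cap\{|u-w|<k\}}|u-w|^{-\alpha}g(u,w)\,dx\le Ck^{1-\alpha}|\mu|(B_{10R}(x_0)),$$
and the distribution-function computation of the interior proof produces the boundary analogue of \eqref{es1} over $\Omega_{10R}(x_0)$.

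For the Sobolev step, the zero-extension of $|u-w|^{1-\alpha/p}$ lies in $W_0^{1,1}(B_{10R}(x_0))$, so Sobolev's inequality on $B_{10R}(x_0)$ delivers the analogue of \eqref{es2},
$$\Bigl(\int_{\Omega_{10R}(x_0)}|u-w|^{\frac{(p-\alpha)n}{p(n-1)}}\Bigr)^{\frac{n-1}{n}}\le CM,$$
with $M$ defined as in \eqref{es6}. From here the rest of the interior argument carries over unchanged: the pointwise decomposition $|\nabla(u-w)|\le C(g(u,w)^{1/p}+g(u,w)^{1/2}|\nabla u|^{(2-p)/2})$, H\"older's inequality, and the admissibility of $\alpha_0=\alpha/p$ in the range $\bigl(\tfrac{(2-p)n-1}{n-p},\tfrac12\bigr)$ --- which is nonempty precisely because $p>\frac{3n-2}{2n-1}$ --- all remain valid. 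This yields \eqref{1111201410"+}, and \eqref{1111201410"} then follows by Young's inequality.

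The main obstacle is purely administrative: verifying the admissibility of the truncation-composition test function in $W_0^{1,p}(\Omega_{10R}(x_0))$ and the validity of the zero-extension machinery for the Sobolev step. Both are standard since $u-w$ has vanishing trace on $\partial\Omega_{10R}(x_0)$, so the calculus of truncations and the Sobolev embedding apply without any quantitative dependence on the Reifenberg flatness of $\Omega$; in particular no boundary regularity of $\partial\Omega$ is needed in this lemma beyond what is already guaranteed by $(\delta_0,R_0)$-Reifenberg flatness with $\delta_0<1/2$ and $R<R_0/10$.
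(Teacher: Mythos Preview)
Your proposal is correct and follows exactly the approach the paper intends: the paper gives no separate proof for this lemma, merely stating ``As Lemma \ref{111120149} and \ref{111120149'}, we also get'', and your write-up correctly identifies the one genuine change needed, namely that $u-w\in W_0^{1,p}(\Omega_{10R}(x_0))$ extends by zero to $W_0^{1,p}(B_{10R}(x_0))$ so that the Sobolev step \eqref{es2} goes through with a constant depending only on $n$. Everything else in the interior argument---the truncation test function, the distribution-function estimate, the H\"older/Young manipulations, and the admissible range of $\alpha_0$---carries over verbatim as you describe.
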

\begin{lemma}\label{111120149""} Assume that $1<p\leq \frac{3n-2}{2n-1}$ and $\mu \in L^\sigma(\Omega)$ for  $\sigma>\frac{n}{p(2n-1)-2(n-1)}$. Let $w$ be in \eqref{111120146*} and $\gamma_0$ be in Lemma  \ref{111120149'}.  There holds 
	\begin{align}\nonumber
	&	\left(	\fint_{B_{10R}(x_0)}|\nabla (u- w)|^{\gamma_0}dx\right)^{\frac{1}{\gamma_0}}\\&~~~~\leq C\left[\frac{\int_{B_{10R}(x_0)}|\mu|^\sigma}{R^{n-\sigma}}\right]^{\frac{1}{\sigma (p-1)}}+C \left[\frac{\int_{B_{10R}(x_0)}|\mu|^\sigma}{R^{n-\sigma}}\right]^{\frac{1}{\sigma }}\left(	\fint_{B_{10R}(x_0)}|\nabla u|^{\gamma_0}dx\right)^{\frac{2-p}{\gamma_0}}\label{1111201410'"+}.
	\end{align}
	 In particular, for any $\varepsilon>0$,
	\begin{align}\label{1111201410'"}
	\left(	\fint_{B_{10R}(x_0)}|\nabla (u- w)|^{\gamma_0}dx\right)^{\frac{1}{\gamma_0}}\leq C_\varepsilon \left[\frac{\int_{B_{10R}(x_0)}|\mu|^\sigma}{R^{n-\sigma}}\right]^{\frac{1}{\sigma (p-1)}}+\varepsilon \left(	\fint_{B_{10R}(x_0)}|\nabla u|^{\gamma_0}dx\right)^{\frac{1}{\gamma_0}}.
	\end{align}
\end{lemma}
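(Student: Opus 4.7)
The plan is to mimic the proof of Lemma \ref{111120149'} almost verbatim, with the single essential observation that on $\Omega_{10R}(x_0)$ the difference $u-w$ belongs to $W^{1,p}_0(\Omega_{10R}(x_0))$ (by construction of $w$ and the fact that $u\in W^{1,p}_0(\Omega)$), so it extends by zero to a function in $W^{1,p}_0(B_{10R}(x_0))$. This is what allows every Sobolev-type step performed on a ball in the interior case to carry over to the boundary case without any appeal to the geometry of $\partial\Omega$ beyond the qualitative Reifenberg flatness (used only to make $\Omega_{10R}(x_0)$ nondegenerate).

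First I would write the weak formulation of \eqref{5hh070120148} against a test function $\varphi\in W^{1,p}_0(\Omega_{10R}(x_0))$, subtract the weak form of \eqref{111120146*}, and insert $\varphi=T_{h,k^{1-\alpha}}(|u-w|^{-\alpha}(u-w))$ (which is admissible thanks to $u-w\in W^{1,p}_0(\Omega_{10R}(x_0))$ and the truncation in \eqref{TT}). Exactly as in Lemma \ref{111120149'}, the ellipticity \eqref{condi2} and Fatou yield
\[
\int_{\Omega_{10R}(x_0)\cap\{|u-w|<k\}}|u-w|^{-\alpha}g(u,w)\,dx\le C\int_{\Omega_{10R}(x_0)}|T_{k^{1-\alpha}}(|u-w|^{1-\alpha})|\,|\mu|\,dx,
\]
where $g(u,w)=|\nabla(u-w)|^2/(|\nabla u|+|\nabla w|)^{2-p}$. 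Using $|T_{k^{1-\alpha}}(|u-w|^{1-\alpha})|\le k^{1-\alpha-\beta/\sigma'}|u-w|^{\beta/\sigma'}$ together with H\"older on the right with exponents $\sigma,\sigma'$ gives the analogue of the key estimate on $\Omega_{10R}(x_0)$, with $\|\mu\|_{L^\sigma(B_{10R}(x_0))}$ in place of $|\mu|(B_{2R})^{1/p}$.

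Next I would run the distribution-function/truncation argument exactly as in \eqref{es1}--\eqref{es12} with $B_{2R}$ replaced by $B_{10R}(x_0)$: all integrals involving $u-w$ are implicitly over $\Omega_{10R}(x_0)$, but since $u-w$ extends by zero to $B_{10R}(x_0)$, I can equivalently integrate over $B_{10R}(x_0)$. This is also the step where Sobolev's inequality \eqref{es2} is used on $|u-w|^{1-\alpha/p}$; the zero extension makes the Sobolev embedding $W^{1,1}_0(B_{10R}(x_0))\hookrightarrow L^{n/(n-1)}(B_{10R}(x_0))$ directly available. The definition $M=\|u-w\|_{L^\beta(B_{10R}(x_0))}^{(p-\alpha)/p}+\|\nabla(u-w)\|_{L^{\gamma_0}(B_{10R}(x_0))}^{(p-\alpha)/p}$ and the H\"older splitting \eqref{es5} go through without change, and one arrives at the counterpart of \eqref{es17}.

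Finally, one verifies that the admissibility conditions \eqref{es15} and \eqref{es15'} are equivalent to $p>\frac{n(1+(1-\alpha_0)/\sigma)-1}{n-\alpha_0}$ with $\alpha_0=\alpha/p<1/2$, which is satisfied for some $\alpha_0\in\bigl(\frac{-1+(1+1/\sigma-p)n}{n/\sigma-p},\,1/2\bigr)$ precisely when $\sigma>\frac{n}{p(2n-1)-2(n-1)}$; this is exactly the hypothesis. Absorbing the nonlinear occurrences of $M$ via H\"older's inequality, as in the last step of Lemma \ref{111120149'}, produces \eqref{1111201410'"+}, and a standard Young's inequality splits off the $\varepsilon$-term in \eqref{1111201410'"}. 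I do not expect any genuinely new obstacle: the only delicate point is confirming that $u-w\in W^{1,p}_0(\Omega_{10R}(x_0))$ so that the truncated test function is admissible and the zero extension makes the Sobolev inequality on $B_{10R}(x_0)$ available; once this is in hand, the interior proof is copied word for word.
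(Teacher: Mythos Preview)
Your proposal is correct and matches the paper's own approach: the paper simply states Lemmas \ref{111120149"} and \ref{111120149""} with the remark ``As Lemma \ref{111120149} and \ref{111120149'}, we also get'', offering no separate argument. Your write-up supplies exactly the missing detail the paper leaves implicit---that $u-w\in W^{1,p}_0(\Omega_{10R}(x_0))$ extends by zero to $B_{10R}(x_0)$, so the Sobolev inequality and the truncated test function work on the full ball---after which the interior computation of Lemma \ref{111120149'} transfers verbatim.
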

   \begin{proposition} \label{boundary}Assume that $\mu\in L^\sigma(\Omega)$ for $\sigma>\frac{n}{p(2n-1)-2(n-1)}$ if $1<p\leq \frac{3n-2}{2n-1}$. Let $\gamma_0$ be in Lemma  \ref{111120149} and \ref{111120149'}. For any $\varepsilon>0$, there exists $\delta_0=\delta_0(n,\Lambda,\varepsilon)$ such that the following holds. If $\Omega$ is $(\delta_0,R_0)-$ Reifenberg flat domain and $u\in W_{0}^{1,p}(\Omega)$ with $x_0\in\partial \Omega$ and $0<R<R_0/10$, there exists a function $V\in W^{1,\infty}(B_{R/10}(x_0))$ 
   	\begin{align}
   	||\nabla V||_{L^\infty(B_{R/10}(x_0))}\leq C \left[\mathbf{T}(\mu,B_{10R}(x_0))\right]^{\frac{1}{p-1}}+C (\fint_{B_{10R}}|\nabla u|^{\gamma_0})^{1/\gamma_0}
   	\end{align}
   	and
   	\begin{align}\nonumber
   &	(\fint_{B_{R/10}(x_0)}|\nabla (u-V)|^{\gamma_0}dx)^{\frac{1}{\gamma_0}}\\&~~~~~\leq C_\varepsilon \left[\mathbf{T}(\mu,B_{10R}(x_0))\right]^{\frac{1}{p-1}}+C(([A]_{R_0})^{\kappa} +\varepsilon)(\fint_{B_{10R}(x_0)}|\nabla u|^{\gamma_0})^{1/\gamma_0}
   	\end{align}
   	 for some $\kappa\in (0,1)$, where
   	 \begin{equation*}
   	 \mathbf{T}(\mu,B_{10R}(x_0)):=\left\{ \begin{array}{l}
   	 \frac{|\mu|(B_{10R}(x_0))}{R^{n-1}} ~~~\text{if}~~\frac{3n-2}{2n-1}<p\leq 2-\frac{1}{n},\\ 
   	 \left[\frac{\int_{B_{10R}(x_0)}|\mu|^\sigma}{R^{n-\sigma}}\right]^{\frac{1}{\sigma }}, ~~\text{ if }~1<p\leq \frac{3n-2}{2n-1}. \\ 
   	 \end{array} \right.
   	 \end{equation*}    
   \end{proposition}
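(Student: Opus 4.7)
The plan mirrors the proof of Proposition~\ref{inter}, replacing each interior tool by its boundary counterpart. We build $V$ through a two-step comparison chain $u \rightsquigarrow w \rightsquigarrow V$, where $w$ is the $A$-harmonic replacement of $u$ from \eqref{111120146*}, and $V$ is a Lipschitz approximation of $w$ on the smaller ball $B_{R/10}(x_0)$ obtained by comparing $w$ with a solution of a frozen-coefficient homogeneous problem posed on a nearby half-ball. The Reifenberg flatness of $\Omega$ (with $\delta_0$ small) forces $\Omega\cap B_{10R}(x_0)$ to be close, in Hausdorff distance, to a half-ball, and the $(\delta,R_0)$-BMO smallness of $A$ controls the oscillation of the coefficients, so the comparison to the half-ball model problem is quantitatively good.

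The Lipschitz truncation step (the boundary analogue of the interior estimates [55Ph2, Lemma 2.3 and Corollary 2.4] used in Proposition~\ref{inter}) yields $V\in W^{1,\infty}(B_{R/10}(x_0))$ such that
\begin{equation*}
\|\nabla V\|_{L^\infty(B_{R/10}(x_0))}\leq C\Bigl(\fint_{B_{10R}(x_0)}|\nabla w|^{p}\,dx\Bigr)^{1/p},
\end{equation*}
and
\begin{equation*}
\fint_{B_{R/10}(x_0)}|\nabla w-\nabla V|\,dx\leq C\bigl(([A]_{R_0})^{\kappa}+\varepsilon_1\bigr)\Bigl(\fint_{B_{10R}(x_0)}|\nabla w|^{p}\,dx\Bigr)^{1/p}
\end{equation*}
for some $\kappa\in(0,1)$, with $\varepsilon_1$ arbitrarily small once $\delta_0$ is chosen small. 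Here $w$ is extended by zero outside $\Omega$ so that averages on full balls are meaningful.

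Next one replaces $(\fint|\nabla w|^{p})^{1/p}$ by a quantity involving $\nabla u$ and $\mathbf{T}(\mu,\cdot)$. The reverse H\"older estimate of Lemma~\ref{111120147*}, applied on nested balls inside $B_{10R}(x_0)$, together with Jensen's inequality, gives
\begin{equation*}
\Bigl(\fint_{B_{10R}(x_0)}|\nabla w|^{p}\Bigr)^{1/p}\leq C\Bigl(\fint_{B_{10R}(x_0)}|\nabla w|^{p-1}\Bigr)^{1/(p-1)}\leq C\Bigl(\fint_{B_{10R}(x_0)}|\nabla w|^{\gamma_{0}}\Bigr)^{1/\gamma_{0}},
\end{equation*}
provided $\gamma_0$ is chosen in its admissible range with $\gamma_0\geq p-1$, which is always possible since $\frac{(p-1)n}{n-1}>p-1$. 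The triangle inequality now splits $(\fint|\nabla w|^{\gamma_0})^{1/\gamma_0}$ into $(\fint|\nabla u|^{\gamma_0})^{1/\gamma_0}$ plus $(\fint|\nabla u-\nabla w|^{\gamma_0})^{1/\gamma_0}$, and the latter is absorbed by Lemma~\ref{111120149"} (when $p>\frac{3n-2}{2n-1}$) or by Lemma~\ref{111120149""} (when $1<p\leq\frac{3n-2}{2n-1}$), contributing precisely $[\mathbf{T}(\mu,B_{10R}(x_0))]^{1/(p-1)}$ plus $\varepsilon(\fint|\nabla u|^{\gamma_{0}})^{1/\gamma_{0}}$.

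Assembling the pieces via
\begin{equation*}
\Bigl(\fint_{B_{R/10}(x_0)}|\nabla u-\nabla V|^{\gamma_0}\Bigr)^{1/\gamma_0}\leq\Bigl(\fint_{B_{R/10}(x_0)}|\nabla u-\nabla w|^{\gamma_0}\Bigr)^{1/\gamma_0}+\Bigl(\fint_{B_{R/10}(x_0)}|\nabla w-\nabla V|^{\gamma_0}\Bigr)^{1/\gamma_0},
\end{equation*}
and absorbing the volume factor $|B_{10R}|/|B_{R/10}|=100^{n}$ into the constant, one obtains both claimed estimates after choosing $\delta_0$ so small that $\varepsilon_1\leq\varepsilon$. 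The main technical obstacle is the bookkeeping of nested ball radii as one chains the reverse H\"older bound with the Lipschitz approximation, and verifying that the sub-unit exponent $\gamma_0<1$ does not obstruct the Jensen step; both are handled exactly as in Proposition~\ref{inter}, and the Reifenberg flatness supplies the geometric room to replace $\Omega_{10R}(x_0)$ by a half-ball in the frozen-coefficient comparison.
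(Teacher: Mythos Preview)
Your proposal is correct and follows essentially the same route as the paper: invoke the boundary Lipschitz approximation for $w$ (the paper cites \cite[Corollary~2.13]{55Ph2}, which is precisely the boundary analogue you describe), then chain it with the reverse H\"older inequality of Lemma~\ref{111120147*} and the comparison estimates of Lemmas~\ref{111120149"}--\ref{111120149""}. The only cosmetic discrepancy is that the paper's Lipschitz approximation bounds involve $\bigl(\fint_{B_R(x_0)}|\nabla w|^p\bigr)^{1/p}$ rather than the average over $B_{10R}(x_0)$, which is exactly what makes the reverse H\"older step (from $B_R$ to a ball inside $B_{10R}$) immediate without the covering/bookkeeping you flag; otherwise the arguments coincide.
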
 
\begin{proof}
By \cite[Corollary 2.13]{55Ph2}, for any $\varepsilon>0$, there exists $\delta_0=\delta_0(n,\Lambda,\varepsilon)$ such that if $\Omega$ is $(\delta_0,R_0)-$ Reifenberg flat domain  then we find  $V\in W^{1,\infty}(B_{R/10}(x_0))$ satisfying  
\begin{align*}
||\nabla V||_{L^\infty(B_{R/10}(x_0))}\leq C \left(\fint_{B_R(x_0)}|\nabla w|^p\right)^{1/p}
\end{align*}
and 
\begin{align*}
\fint_{B_{R/10}(x_0)}|\nabla w-\nabla V|dx\leq C (([A]_{R_0})^{\kappa}+\varepsilon) \left(\fint_{B_R(x_0)}|\nabla w|^p\right)^{1/p}
\end{align*}
for some $\kappa\in (0,1)$. Combining these with \eqref{111120148*} in Lemma \ref{111120147*} and \eqref{1111201410"} in Lemma \ref{111120149"},  \eqref{1111201410'"} in Lemma \ref{111120149""}, we get the results. The proof is complete. 
\end{proof}
\section{Global estimates on Reifenberg flat domains }
Now we prove results for Reifenberg flat domain. First,
we will use proposition \ref{inter}, \ref{boundary} to get the following result.
\begin{theorem}\label{5hh23101312}   Let $w\in A_\infty$, $\mu\in\mathfrak{M}_b(\Omega)$. Assume that $\mu\in L^\sigma(\Omega)$ for $\sigma>\frac{n}{p(2n-1)-2(n-1)}$ if $1<p\leq \frac{3n-2}{2n-1}$.  Let $\gamma_0$ be in Lemma  \ref{111120149} and \ref{111120149'}.  For any $\varepsilon>0,R_0>0$ one finds  $\delta_1=\delta_1(n,p,\sigma,\Lambda,\varepsilon,[w]_{A_\infty})\in (0,1)$ and $\delta_2=\delta_2(n,p,\sigma,\Lambda,\varepsilon,[w]_{A_\infty},diam(\Omega)/R_0)\in (0,1)$ and $\Lambda_0=\Lambda_0(n,p,\sigma,\Lambda)>0$ such that if $\Omega$ is  $(\delta_1,R_0)$- Reifenberg flat domain and $[\mathcal{A}]_{R_0}\le \delta_1$ then  for any renormalized solution $u$ \eqref{5hh070120148}, we have for any $\lambda$
	\begin{equation}\label{5hh16101311}
	w(\{(M(|\nabla u|^{\gamma_0}))^{1/\gamma_0}>\Lambda_0\lambda, (\mathbf{K}_1(\mu))^{\frac{1}{p-1}}\le \delta_2\lambda \}\cap \Omega)\le C\varepsilon w(\{ (M(|\nabla u|^{\gamma_0}))^{1/\gamma_0}> \lambda\}\cap \Omega)
	\end{equation}
	where the constant $C$  depends only on $n,p,\sigma,\Lambda,diam(\Omega)/R_0, [w]_{A_\infty}$ and $	\mathbf{K}_1(\mu)$ is denoted in \eqref{def}.
\end{theorem}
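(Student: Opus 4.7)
The statement is a good-$\lambda$ inequality of Caffarelli--Peral type. The strategy is to cover the bad set
$$E(\lambda) := \{(\mathbf{M}(|\nabla u|^{\gamma_0}))^{1/\gamma_0} > \Lambda_0 \lambda,\ (\mathbf{K}_1(\mu))^{1/(p-1)} \le \delta_2 \lambda\} \cap \Omega$$
by a disjoint Vitali family of balls inside the upper level set $F(\lambda) := \{(\mathbf{M}(|\nabla u|^{\gamma_0}))^{1/\gamma_0} > \lambda\} \cap \Omega$, on each of which Propositions \ref{inter} and \ref{boundary} produce a Lipschitz proxy that controls the Lebesgue density of $E(\lambda)$. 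The density bound is then promoted to a weighted density bound via the $\mathbf{A}_\infty$ property of $w$, and summation closes the argument.

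\textbf{Step 1: Vitali decomposition.} I would assume $E(\lambda) \neq \emptyset$. Using Proposition \ref{pro1} to bound the global $L^{\gamma_0}$ average of $|\nabla u|$ in terms of $|\mu|(\Omega)$, and taking $\Lambda_0$ larger than the resulting constant (times a factor involving $\mathrm{diam}(\Omega)/R_0$), every $x \in E(\lambda)$ admits a largest radius $r_x \in (0,\, c_0\, \mathrm{diam}(\Omega))$ with $\fint_{B_{r_x}(x)} |\nabla u|^{\gamma_0} > \lambda^{\gamma_0}$. Vitali extracts a disjoint subfamily $\{B_{r_i}(x_i)\}$ with $E(\lambda) \subset \bigcup_i B_{5 r_i}(x_i)$. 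Set $B_i := B_{5 r_i}(x_i)$. Maximality of $r_i$ yields $\fint_{B_i} |\nabla u|^{\gamma_0} \le \lambda^{\gamma_0}$, and picking any $y_i \in E(\lambda) \cap B_{r_i}(x_i)$ gives the data control $\mathbf{T}(\mu, B_i)^{1/(p-1)} \le C\, \mathbf{K}_1(\mu)(y_i)^{1/(p-1)} \le C \delta_2 \lambda$.

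\textbf{Step 2: Per-ball density.} Separate interior balls ($B_{10 r_i}(x_i) \subset \Omega$) from boundary balls, which I would recentre at a nearby point of $\partial \Omega$ using the Reifenberg condition with $\delta_1 < 1/2$. Proposition \ref{inter} or \ref{boundary} yields a Lipschitz proxy $v_i$ with $\|\nabla v_i\|_{L^\infty} \le C_1 \lambda$ (absorbing the $\delta_2 \lambda$ into $\lambda$) and
$$\Bigl(\fint_{B_i} |\nabla u - \nabla v_i|^{\gamma_0}\Bigr)^{1/\gamma_0} \le \bigl(C_\varepsilon \delta_2 + C \delta_1^\kappa + C \varepsilon\bigr) \lambda.$$
Fix $\Lambda_0 := 4 C_1$. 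A standard decomposition of the maximal function into a local part (handled by Weak-$(1,1)$ against $\chi_{B_i}|\nabla u - \nabla v_i|^{\gamma_0}$) and a far-field part (controlled by the stopping-time bound from Step 1) forces every $z \in B_i \cap E(\lambda)$ to satisfy $(\mathbf{M}(\chi_{B_i}|\nabla u - \nabla v_i|^{\gamma_0}))^{1/\gamma_0}(z) > (\Lambda_0/2)\lambda$, and therefore
$$|E(\lambda) \cap B_i| \le C\,(C_\varepsilon \delta_2 + \delta_1^\kappa + \varepsilon)^{\gamma_0}\, |B_i|.$$
Choosing $\delta_1,\delta_2$ small (depending on $\varepsilon$) makes the right-hand side at most $\varepsilon |B_i|$.

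\textbf{Step 3: Weighted summation and main obstacle.} The $\mathbf{A}_\infty$ property of $w$ converts $|E(\lambda) \cap B_i| \le \varepsilon |B_i|$ into $w(E(\lambda) \cap B_i) \le C \varepsilon^{\nu} w(B_i)$. Since $\fint_{B_{r_i}(x_i)}|\nabla u|^{\gamma_0} > \lambda^{\gamma_0}$ and the centred maximal function dominates such averages, each $B_{r_i}(x_i)$ lies in $F(c\lambda)$ for a dimensional $c>0$; by disjointness of the $B_{r_i}(x_i)$ and the doubling of $w$, $\sum_i w(B_i) \le C w(F(c\lambda)) \le C w(F(\lambda))$ (the last inequality after absorbing $c$ into an eventual redefinition of $\Lambda_0$). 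Summing gives $w(E(\lambda)) \le C\varepsilon^{\nu} w(F(\lambda))$, and renaming $\varepsilon^{\nu}$ as $\varepsilon$ yields \eqref{5hh16101311}. The hardest aspect is the order of quantifiers: $\Lambda_0$ must be fixed from $C_1$ \emph{first}, only after which $\varepsilon$ is specified and $\delta_1,\delta_2$ can be made small; moreover $\delta_1$ has to bound simultaneously the Reifenberg defect and the $(\delta,R_0)$-BMO coefficient $[A]_{R_0}$. The dependence on $\mathrm{diam}(\Omega)/R_0$ enters only through the cap $r_i \le R_0/10$ needed for applicability of Proposition \ref{boundary}, which may force truncating $r_x$ and replacing the maximal level $\Lambda_0 \lambda$ by a slightly larger value in the above reasoning.
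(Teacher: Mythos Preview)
Your overall architecture---a good-$\lambda$ inequality via a Vitali/stopping-time covering, per-ball density estimates from Propositions~\ref{inter} and~\ref{boundary}, and then weighted summation through the $\mathbf{A}_\infty$ property---is the right one and matches the paper's strategy (the paper packages the covering step into the abstract Lemma~\ref{5hhvitali2} rather than an explicit Calder\'on--Zygmund stopping time, but this is cosmetic).

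However, there is a genuine gap in Step~2. Propositions~\ref{inter} and~\ref{boundary} are stated and proved for $u\in W^{1,p}_{\mathrm{loc}}(\Omega)$ (interior) and $u\in W^{1,p}_0(\Omega)$ (boundary): the comparison function $w$ is the unique $W^{1,p}$ solution agreeing with $u$ on the boundary of a ball, and the test-function manipulations in Lemmas~\ref{111120149}--\ref{111120149'} need $u-w\in W^{1,p}_0$. A renormalized solution of \eqref{5hh070120148} in the singular range $1<p\le 2-\frac{1}{n}$ is \emph{not} in $W^{1,p}$; one only has $T_k(u)\in W^{1,p}_0(\Omega)$ and $|\nabla u|^{p-1}\in L^r$ for $r<\frac{n}{n-1}$. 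So you cannot apply the propositions directly to $u$ as you do.

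The paper resolves this by an approximation layer you omit: it introduces $u_k\in W^{1,p}_0(\Omega)$ solving the equation with regularised data $\mu_k$ (either $u_k=T_k(u)$ with $\mu_k=\mu_0+\lambda_k^+-\lambda_k^-$ when $\frac{3n-2}{2n-1}<p\le 2-\frac1n$, or $\mu_k=T_k(\mu)$ when $1<p\le\frac{3n-2}{2n-1}$), applies Propositions~\ref{inter}/\ref{boundary} to $u_k$, and then uses Proposition~\ref{pro2} ($\nabla u_k\to\nabla u$ in $L^{\gamma_0}$) together with $|\mu_k|\le|\mu|$ to pass to the limit in the density bound. Concretely, the splitting on each ball becomes
\[
|\nabla u|^{\gamma_0}\le 3^{\gamma_0}\bigl(|\nabla(u-u_k)|^{\gamma_0}+|\nabla(u_k-v_k)|^{\gamma_0}+|\nabla v_k|^{\gamma_0}\bigr),
\]
with the first term vanishing as $k\to\infty$ and only the middle term controlled by the comparison estimate. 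Without this approximation step your per-ball density inequality $|E(\lambda)\cap B_i|\le C(\cdots)^{\gamma_0}|B_i|$ is not justified. Insert the $u_k$ layer (and the extra $|\nabla(u-u_k)|^{\gamma_0}$ term in the weak-$(1,1)$ estimate) and your argument goes through.
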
 
Hereafter, $M$ denotes the  Hardy-Littlewood maximal function defined for each locally integrable function  $f$ in $\mathbb{R}^{n}$ by
\begin{equation*}
\mathcal{M}(f)(x)=\sup_{\rho>0}\fint_{B_\rho(x,t)}|f(y)|dy~~\forall x\in\mathbb{R}^{N}.
\end{equation*}
If $p>1$  we verify that $M$ is operator from $L^1(\mathbb{R}^{N})$ into $L^{1,\infty}(\mathbb{R}^{N})$  see  \cite{55Stein2,55Stein3}. 
\\ We would like to mention that the use
of the Hardy-Littlewood maximal function in non-linear degenerate problems was started in the elliptic setting by T.  Iwaniec in his fundamental paper \cite{Iwa}.\medskip \\
To prove Theorem \ref{5hh23101312}, we will use  L. Caddarelli and I. Peral's technique in \cite{CaPe}. Namely, it is based on the following technical lemma whose proof is  a consequence of Lebesgue Differentiation Theorem and the standard Vitali covering  lemma, can be found in  \cite{55BW4,55MePh2} with some modifications to fit the setting here.

\begin{lemma}\label{5hhvitali2} Let $\Omega$ be a $(\delta,R_0)$-Reifenberg flat domain with $\delta<1/4$ and let $w$ be an $\mathbf{A}_\infty$ weight. Suppose that the sequence of balls $\{B_r(y_i)\}_{i=1}^L$ with centers $y_i\in\overline{\Omega}$ and  radius $r\leq R_0/4$ covers $\Omega$.  Let $E\subset F\subset \Omega$ be measurable sets for which there exists $0<\varepsilon<1$ such that  $w(E)<\varepsilon w(B_r(y_i))$ for all $i=1,...,L$, and  for all $x\in \Omega$, $\rho\in (0,2r]$, we have
	$B_\rho(x)\cap \Omega\subset F$      
	if $w(E\cap B_\rho(x))\geq \varepsilon w(B_\rho(x))$. Then $
	w(E)\leq \varepsilon Cw(F)$         
	for a constant $C$ depending only on $n$ and $[w]_{\mathbf{A}_\infty}$.
\end{lemma}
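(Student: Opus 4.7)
The plan is to run a weighted Calderón--Zygmund stopping-radius argument on balls whose radii are the largest scales at which the $w$-density of $E$ meets the threshold $\varepsilon$; the covering assumption bounds those radii by $2r$, which puts the hypothesis of the lemma into play. Since $w \in \mathbf{A}_\infty$ is absolutely continuous with respect to Lebesgue measure, the differentiation theorem gives, for $w$-a.e.\ $x \in E$, $\lim_{\rho\to 0^+} w(E\cap B_\rho(x))/w(B_\rho(x)) = 1$. Fix such an $x$ and pick an index $i$ with $x \in B_r(y_i)$ (possible since the balls cover $\Omega$). For $\rho \geq 2r$ one has $B_r(y_i) \subset B_\rho(x)$, hence
\[
w(E\cap B_\rho(x)) \leq w(E) < \varepsilon\, w(B_r(y_i)) \leq \varepsilon\, w(B_\rho(x)).
\]
By continuity of $\rho \mapsto w(E\cap B_\rho(x))/w(B_\rho(x))$, the supremum $\rho_x$ of those $\rho$ at which the ratio exceeds $\varepsilon$ lies in $(0,2r]$, is attained with equality, and the ratio is $\leq \varepsilon$ for every $\rho \geq \rho_x$.

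Next, the Vitali covering lemma applied to $\{B_{\rho_x}(x)\}$ extracts a countable pairwise disjoint subfamily $\{B_j := B_{\rho_{x_j}}(x_j)\}$ whose $5$-fold enlargements cover $E$ up to a $w$-null set. Since $5\rho_{x_j} > \rho_{x_j}$, the stopping property gives $w(E\cap 5B_j) \leq \varepsilon\, w(5B_j)$, and the doubling property of $w$ (a consequence of $\mathbf{A}_\infty$) yields
\[
w(E) \leq \sum_j w(E\cap 5B_j) \leq \varepsilon \sum_j w(5B_j) \leq \varepsilon\, c_1 \sum_j w(B_j),
\]
with $c_1 = c_1([w]_{\mathbf{A}_\infty})$.

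Finally, since $\rho_{x_j} \leq 2r \leq R_0/2$ and $x_j \in \Omega$, Reifenberg flatness with $\delta < 1/4$ yields the uniform Lebesgue lower bound $|B_j \cap \Omega|/|B_j| \geq c_n > 0$, and the $\mathbf{A}_\infty$ property upgrades this to $w(B_j \cap \Omega) \geq c_2\, w(B_j)$ for $c_2 = c_2(n,[w]_{\mathbf{A}_\infty})$. The hypothesis of the lemma, invoked at $x = x_j$ and $\rho = \rho_{x_j} \in (0,2r]$, then gives $B_j \cap \Omega \subset F$, and disjointness produces $\sum_j w(B_j \cap \Omega) \leq w(F)$. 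Chaining the inequalities one obtains $w(E) \leq \varepsilon\, (c_1/c_2)\, w(F)$, which is the claim with $C = c_1/c_2$ depending only on $n$ and $[w]_{\mathbf{A}_\infty}$. The main obstacle is the uniform Lebesgue density estimate $|B_j \cap \Omega| \geq c_n |B_j|$ for balls possibly centered arbitrarily close to $\partial\Omega$; this is precisely where the quantitative Reifenberg hypothesis $\delta < 1/4$ enters, after which the $\mathbf{A}_\infty$ preservation of Lebesgue density lower bounds closes the argument.
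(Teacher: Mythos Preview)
Your argument is correct and is precisely the approach the paper indicates: the paper does not give a detailed proof but states that the lemma ``is a consequence of Lebesgue Differentiation Theorem and the standard Vitali covering lemma,'' referring to \cite{55BW4,55MePh2} for details. Your stopping-radius construction, Vitali extraction, and use of the Reifenberg measure-density condition together with the two-sided $\mathbf{A}_\infty$ comparability fill in exactly this sketch. One cosmetic remark: the density statement $\lim_{\rho\to0}w(E\cap B_\rho(x))/w(B_\rho(x))=1$ follows from the differentiation theorem for the \emph{doubling} measure $w$ (a consequence of $\mathbf{A}_\infty$), rather than from absolute continuity with respect to Lebesgue measure per se; and the step $|B_j\cap\Omega|\geq c_n|B_j|\Rightarrow w(B_j\cap\Omega)\geq c_2\,w(B_j)$ uses the reverse form of the $\mathbf{A}_\infty$ condition, $|E|/|B|\leq C'(w(E)/w(B))^{\nu'}$, which is an equivalent characterization.
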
   
\begin{proof}[Proof of Theorem \ref{5hh23101312}] Let $u$ be a renormalized solution of \eqref{5hh070120148}. By Proposition \ref{pro1}, we have
	\begin{align*}
	||\nabla u||_{L^{\frac{(p-1)n}{n-1},\infty}(\Omega)}\leq C\left[|\mu|(\Omega)\right]^{\frac{1}{p-1}},
	\end{align*}
	which implies, for any $\gamma\in \left(0,\frac{(p-1)n}{n-1}\right)$, 
	\begin{align}\label{es14}
\left(	\frac{1}{T_0^n}\int_{\Omega}|\nabla u|^{\gamma}\right)^{1/\gamma}\leq C_\gamma \left[\frac{|\mu|(\Omega)}{T_0^{n-1}}\right]^{\frac{1}{p-1}}
	\end{align}
	with $T_0=diam(\Omega)$.\\
Let $\mu_0,\lambda_k^+,\lambda_k^-$ be as in Definition \ref{derenormalized}. Let $u_k\in W_0^{1,p}(\Omega)$ be a unique solution of equation
	\begin{equation}
	\left\{
	\begin{array}
	[c]{l}%
	-\text{div}(A(x,\nabla u_k))=\mu_{k}~~\text{in }\Omega,\\
	{u}_{k}=0\qquad\text{on }\partial\Omega,\\
	\end{array}
	\right.  
	\end{equation}
	where 
	\begin{description}
		\item[1.] if  $\frac{3n-2}{2n-1}<p\leq \frac{2n-1}{n}$,  $\mu_k=\mu_0+\lambda_k^+-\lambda_k^-$, in this case, we have  $u_k=T_k(u)$,
		\item[2.]  if $1<p\leq \frac{3n-2}{2n-1}$, $\mu_k=T_k(\mu)$.
	\end{description}
	 Thus,  by Proposition \ref{pro2}      there exists a subsequence $(u_k)_k$ such that  $\nabla u_k\to \nabla u$  in $L^\gamma(\Omega)$ for all $\gamma\in (0,\frac{(p-1)n}{n-1})$. \medskip\\  To estimates \eqref{5hh16101311}, we will follow  \cite[the proof of Theorem 8.4]{55QH2} and see also \cite[The proof of Theorem 3.1]{55QH3}. Set $$E_{\lambda,\delta_2}=\{(M(|\nabla u|^{\gamma_0}))^{1/\gamma_0}>\Lambda_0\lambda, (\mathbf{K}_1(\mu))^{\frac{1}{p-1}}\le \delta_2\lambda \}\cap \Omega,$$ and $$F_\lambda=\{ (M(|\nabla u|^{\gamma_0}))^{1/\gamma_0}> \lambda\}\cap \Omega,$$ for $\varepsilon\in (0,1)$ and $\lambda>0$, where $\Lambda_0$ is a constant depending only on $n,p,\sigma,\gamma_0,\Lambda$, we will choose it later.  
	 Let $\{y_i\}_{i=1}^L\subset \Omega$ and a ball $B_0$ with radius $2T_0$ such that 
	 $$
	 \Omega\subset \bigcup\limits_{i = 1}^L {{B_{r_0}}({y_i})}  \subset {B_0}$$
	 where $r_0=\min\{R_0/1000,T_0\}$. 
	 We verify that
	 \begin{equation}\label{5hh2310131}
	 w(E_{\lambda,\delta_2})\leq \varepsilon w({B_{r_0}}({y_i})) ~~\forall ~\lambda>0
	 \end{equation}
	 for some $\delta_2$ small enough, depended on $n,p,\Lambda,\epsilon,[w]_{\mathbf{A}_\infty},T_0/R_0$.\\
	 In fact, we can assume that $E_{\lambda,\delta_2}\not=\emptyset$ so $|\mu| (\Omega)\leq T_0^{n-1}(\delta_2\lambda)^{p-1}$ if  $\frac{3n-2}{2n-1}<p\leq \frac{2n-1}{n}$ and $||\mu||_{L^{\sigma}(\Omega)}\leq T_0^{\frac{n}{\sigma}-1}(\delta_2\lambda)^{p-1}$ if $1<p\leq \frac{3n-2}{2n-1}$. Since  $M$ is a bounded operator from $L^1(\mathbb{R}^{N})$ into $L^{1,\infty}(\mathbb{R}^{N})$,
	 \begin{align*}
	  |E_{\lambda,\delta_2}|&\leq \frac{C}{(\Lambda_0\lambda)^{\gamma_0}}\int_{\Omega}|\nabla u|^{\gamma_0}dx\\&\leq \frac{CT_0}{(\Lambda_0\lambda)^{\gamma_0}} \left[\frac{|\mu|(\Omega)}{T_0^{n-1}}\right]^{\frac{\gamma_0}{p-1}},
	 \end{align*}
	 here we used \eqref{es14} for $\gamma=\gamma_0$ in the last inequality.
	 Thus, if $\frac{3n-2}{2n-1}<p\leq \frac{2n-1}{n}$
\begin{align*}
 |E_{\lambda,\delta_2}|\leq \frac{CT_0}{(\Lambda_0\lambda)^{\gamma_0}} \left[\frac{T_0^{n-1}(\delta_2\lambda)^{p-1}}{T_0^{n-1}}\right]^{\frac{\gamma_0}{p-1}}=C\delta_2^{\gamma_0}|B_0|
\end{align*}
  and if $1<p\leq \frac{3n-2}{2n-1}$, 
  \begin{align*}
  |E_{\lambda,\delta_2}|\leq \frac{CT_0}{(\Lambda_0\lambda)^{\gamma_0}} \left[\frac{||\mu||_{L^\sigma(\Omega)}}{T_0^{\frac{n}{\sigma}-1}}\right]^{\frac{\gamma_0}{p-1}}\leq  \frac{CT_0}{(\Lambda_0\lambda)^{\gamma_0}} \left[\frac{T_0^{\frac{n}{\sigma}-1}(\delta_2\lambda)^{p-1}}{T_0^{\frac{n}{\sigma}-1}}\right]^{\frac{\gamma_0}{p-1}}=C\delta_2^{\gamma_0}|B_0|.
  \end{align*}
So, 
	 \begin{align*}
	 	w(E_{\lambda,\delta_2})\leq c\left(\frac{|E_{\lambda,\delta_2}|}{|B_{0}|}\right)^\nu w(B_0)\leq C\delta_2^{\nu\gamma_0} w(B_0)
	 \end{align*}
	 where $(c,\nu)$ is a pair of $A_\infty$ constants of $w$. It is known that (see, e.g \cite{55Gra}) there exist $c_1=c_1(N,c,\nu)$ and $\nu_1=\nu_1(N,c,\nu)$ such that 
	 \begin{equation*}
	 \frac{w(B_0)}{w({B_{r_0}}({y_i}))}\leq c_1\left(\frac{|B_0|}{|{B_{r_0}}({y_i})|}\right)^{\nu_1}~~\forall i.
	 \end{equation*}
	 So, 
	 \begin{align*}
	 w(E_{\lambda,\delta_2})\leq C\delta_2^{\nu\gamma_0} \left(\frac{|B_0|}{|{B_{r_0}}({y_i})|}\right)^{\nu_1} w({B_{r_0}}({y_i}))
	 < \varepsilon w({B_{r_0}}({y_i}))~~\forall ~i
	 \end{align*}
	for  $\delta_2$ small enough depending on $n,p,\sigma,\gamma_0,\epsilon,[w]_{\mathbf{A}_\infty},T_0/R_0$. So, we proved \eqref{5hh2310131}.\\
	 Next we verify that for all $x\in \Omega$ and $r\in (0,2r_0]$ and $\lambda>0$ we have
	 $$
	 B_r(x)\cap \Omega\subset F_\lambda,
	 $$
	 if $$
	 w(E_{\lambda,\delta_2}\cap B_r(x))\geq \varepsilon w(B_r(x)),
	 $$
	 for some $\delta_2$ small enough depending on $n,p,\sigma,\gamma_0,\epsilon,[w]_{\mathbf{A}_\infty},T_0/R_0$ . \\
	 Indeed,
	 take $x\in \Omega$ and $0<r\leq 2r_0$.
	 Now assume that $B_r(x)\cap \Omega\cap F^c_\lambda\not= \emptyset$ and $E_{\lambda,\delta_2}\cap B_r(x)\not = \emptyset$ i.e, there exist $x_1,x_2\in B_r(x)\cap \Omega$ such that $\left[M(|\nabla u|^{\gamma_0})(x_1)\right]^{1/\gamma_0}\leq \lambda$ and $\mathbf{K}_1(\mu)(x_2)\le (\delta_2 \lambda)^{p-1}$.
	 We need to prove that
	 \begin{equation}\label{5hh2310133}
	 w(E_{\lambda,\delta_2}\cap B_r(x)))< \varepsilon w(B_r(x)). 
	 \end{equation}
	 Clearly,
	 \begin{equation*}
	 M(|\nabla u|)(y)\leq \max\{\left[M\left(\chi_{B_{2r}(x)}|\nabla u|^{\gamma_0}\right)(y)\right]^{\frac{1}{\gamma_0}},3^{n}\lambda\}~~\forall y\in B_r(x).
	 \end{equation*}
	 Therefore, for all $\lambda>0$ and $\Lambda_0\geq 3^{n}$,
	 \begin{eqnarray}\label{5hh2310134}E_{\lambda,\delta_2}\cap B_r(x)=\{M\left(\chi_{B_{2r}(x)}|\nabla u|^{\gamma_0}\right)^{\frac{1}{\gamma_0}}>\Lambda_0\lambda, (\mathbf{K}_{1}(\mu))^{\frac{1}{p-1}}\leq \delta_2\lambda\}\cap \Omega \cap B_r(x).
	 \end{eqnarray}
	 In particular, $E_{\lambda,\delta_2}\cap B_r(x)=\emptyset$ if $\overline{B}_{8r}(x)\subset\subset \mathbb{R}^{N}\backslash \Omega$.
	 Thus, it is enough to consider the case $B_{8r}(x)\subset\subset\Omega$ and $B_{8r}(x)\cap\Omega\not=\emptyset$.\\   
	 We consider the case $B_{8r}(x)\subset\subset\Omega$. Applying  Proposition \ref{inter} to  $u=u_{k}\in W_{0}^{1,p}(\Omega),\mu=\mu_k$ and $B_{2R}=B_{8r}(x)$, there is a functions $v_k\in W^{1,p}(B_{4r}(x))\cap W^{1,\infty}(B_{2r}(x))$ such that for any $\eta>0$, 
	 \begin{align}
	 ||\nabla v_k||_{L^\infty(B_{2r}(x))}\leq C \left[\mathbf{T}(\mu_k, B_{8r}(x))\right]^{\frac{1}{p-1}}+C (\fint_{B_{8r}(x)}|\nabla u_k|^{\gamma_0})^{1/\gamma_0}
	 \end{align}
	 and
	 \begin{align}
	 (\fint_{B_{4r}}|\nabla u_k-\nabla v_k|^{\gamma_0}dx)^{\frac{1}{\gamma_0}}\leq C_\eta \left[\mathbf{T}(\mu_k,B_{8r}(x))\right]^{\frac{1}{p-1}}+C(([A]_{R_0})^{\kappa} +\eta)(\fint_{B_{8r}}|\nabla u_k|^{\gamma_0})^{1/\gamma_0}
	 \end{align}
	 for some $\kappa\in (0,1)$, where 
	 \begin{equation*}
	 \mathbf{T}(\mu_k,B_{8r}(x)):=\left\{ \begin{array}{l}
	 \frac{|\mu_k|(B_{8r}(x))}{r^{n-1}} ~~~~~~~~~~\text{if}~~\frac{3n-2}{2n-1}<p\leq 2-\frac{1}{n},\\ 
	 \left[\frac{\int_{B_{8r}(x)}|\mu_k|^\sigma}{r^{n-\sigma}}\right]^{\frac{1}{\sigma }} ~~\text{ if }~1<p\leq \frac{3n-2}{2n-1}. \\ 
	 \end{array} \right.
	 \end{equation*}   
	 Thanks to $\left[M(|\nabla u|^{\gamma_0})(x_1)\right]^{1/\gamma_0}\leq \lambda$ and  $[\mathbf{K}_1(\mu)(x_2)]^{\frac{1}{p-1}}\le \delta_2 \lambda$  with $x_1,x_2\in B_r(x)$ and definitions of $u_k,\mu_k$, we get 
	 \begin{align*}
	 \mathop {\limsup }\limits_{k \to \infty } ||\nabla v_{k}||_{L^\infty(B_{2r}(x))}&\leq   C \left[\mathbf{T}(\mu, \overline{B_{8r}(x)})\right]^{\frac{1}{p-1}}+C (\fint_{B_{8r}(x)}|\nabla u|^{\gamma_0})^{1/\gamma_0}    \\&\leq   C [\mathbf{K}_1(\mu)(x_2)]^{\frac{1}{p-1}}+C \left[M(|\nabla u|^{\gamma_0})(x_1)\right]^{1/\gamma_0}                     
	 \\&\leq C\lambda,  
	 \end{align*}
	 and 
	 \begin{align*}
	 \mathop {\limsup }\limits_{k \to \infty }  (\fint_{B_{4r}(x)}|\nabla u_k-\nabla v_k|^{\gamma_0}dx)^{\frac{1}{\gamma_0}}&\leq C_\eta \left[\mathbf{T}(\mu,\overline{B_{8r}(x)})\right]^{\frac{1}{p-1}}+C(([A]_{R_0})^{\kappa} +\eta)\left[M(|\nabla u|^{\gamma_0})(x_1)\right]^{1/\gamma_0}                   \\
	 &\leq C_\eta [\mathbf{K}_1(\mu)(x_2)]^{\frac{1}{p-1}}+C(([A]_{R_0})^{\kappa} +\eta)(\fint_{B_{8r}(x)}|\nabla u|^{\gamma_0})^{1/\gamma_0}
	 \\&\leq C\left(C_{\eta}\delta_2+\delta_1^{\kappa}+\eta\right)\lambda.
	 \end{align*}                                                                Here we used $[A]_{R_0}\leq \delta_1$ in the last inequality. \\
	 So, we can assume that 
	 \begin{equation}\label{5hh2310136}
	 ||\nabla v_{k}||_{L^\infty(B_{2r}(x))}\leq C\lambda ~~\text{and}
	 \end{equation}                        
	 \begin{equation}\label{5hh2310137} 
	 (\fint_{B_{4r}(x)}|\nabla u_k-\nabla v_k|^{\gamma_0}dx)^{\frac{1}{\gamma_0}}\leq C\left(C_{\eta}\delta_2+\delta_1^{\kappa}+\eta\right)\lambda
	 \end{equation}
	 for all $k\geq k_0$.\\
	  Since 
	  \begin{align*}
	  (	M(|\sum_{j=1}^{m}f_j|^{\gamma_0}))^{1/\gamma_0}\leq m\sum_{j=1}^{m}	(M(|f_j|^{\gamma_0}))^{1/\gamma_0}
	  \end{align*}       
	  for all $m\geq2$, 
	  \begin{align}\nonumber
	  |E_{\lambda,\delta_2}\cap B_r(x)|&\leq   |\{M\left(\chi_{B_{2r}(x)}|\nabla (u_k-v_k)|^{\gamma_0}\right)^{\frac{1}{\gamma_0}}>\Lambda_0\lambda/9\}\cap B_r(x)|
	  \\&\nonumber+ |\{M\left(\chi_{B_{2r}(x)}|\nabla (u-u_k)|^{\gamma_0}\right)^{\frac{1}{\gamma_0}}>\Lambda_0\lambda/9\}\cap B_r(x)|\\&+
	  |\{M\left(\chi_{B_{2r}(x)}|\nabla v_k|^{\gamma_0}\right)^{\frac{1}{\gamma_0}}>\Lambda_0\lambda/9\}\cap B_r(x)|         \label{es18}                 
	  \end{align}  
	 In view of \eqref{5hh2310136} we see that for $\Lambda_0\geq \max\{3^{n},10C\}$ ($C$ is the constant in \eqref{5hh2310136} ) and $k\geq k_0$,
	 \begin{align*}
	 |\{M\left(\chi_{B_{2r}(x)}|\nabla v_k|^{\gamma_0}\right)^{\frac{1}{\gamma_0}}>\Lambda_0\lambda/9\}\cap B_r(x)|=0.
	 \end{align*}
Thus, we deduce from \eqref{es18} and \eqref{5hh2310137} that 
	 \begin{align*}
	 |E_{\lambda,\delta_2}\cap B_r(x)|&\leq   |\{M\left(\chi_{B_{2r}(x)}|\nabla (u_k-v_k)|^{\gamma_0}\right)^{\frac{1}{\gamma_0}}>\Lambda_0\lambda/9\}\cap B_r(x)|
	 \\&+ |\{M\left(\chi_{B_{2r}(x)}|\nabla (u-u_k)|^{\gamma_0}\right)^{\frac{1}{\gamma_0}}>\Lambda_0\lambda/9\}\cap B_r(x)|\\&\leq \frac{C}{\lambda^{\gamma_0}}    \left(\int_{B_{2r}(x)}  |\nabla (u_k-v_k)|^{\gamma_0}+ \int_{B_{2r}(x)}  |\nabla (u-u_k)|^{\gamma_0} \right)    
	 \\&\leq \frac{C}{\lambda^{\gamma_0}}    \left(\left(C_{\eta}\delta_2+\delta_1^{\kappa}+\eta\right)^{\gamma_0}\lambda^{\gamma_0}r^n+ \int_{B_{2r}(x)}  |\nabla (u-u_k)|^{\gamma_0} \right)    
	 \end{align*} 
	 Letting $k\to\infty$  we get 
	 \begin{equation*}
	 |E_{\lambda,\delta_2}\cap B_r(x)|\leq C \left(C_{\eta}\delta_2+\delta_1^{\kappa}+\eta\right)^{\gamma_0}|B_r(x)|.
	 \end{equation*}
	 Thus,  
	 \begin{align*}
	 w(E_{\lambda,\delta_2}\cap B_r(x))&\leq c\left(\frac{|E_{\lambda,\delta_2}\cap B_r(x) |}{|B_r(x)|}\right)^\nu w(B_r(x))
	 \\&\leq  c\left(C_{\eta}\delta_2+\delta_1^{\kappa}+\eta\right)^{\gamma_0\nu} w(B_r(x))
	 \\&< \varepsilon w(B_r(x)).
	 \end{align*} 
	 where $\eta,\delta_1,\delta_2\leq C(n,p,\sigma,\gamma_0,\epsilon,[w]_{\mathbf{A}_\infty})$.\\
	 Next we consider the case $B_{8r}(x)\cap\Omega\not=\emptyset$. Let $x_3\in\partial \Omega$ such that $|x_3-x|=\text{dist}(x,\partial\Omega)$.  We have 
	 \begin{equation}\label{5hh2310138}
	 B_{2r}(x)\subset Q_{10r}(x_3)\subset B_{100r}(x_3)\subset B_{108r}(x)\subset B_{109r}(x_1)
	 \end{equation}
	 and 
	 \begin{equation}\label{5hh2310139}
	 B_{100r}(x_3)\subset B_{108r}(x)\subset B_{109r}(x_2)
	 \end{equation}
	  Applying  Proposition \ref{boundary} to  $u=u_{k}\in W_{0}^{1,p}(\Omega),\mu=\mu_k$ and $B_{10R}=B_{100r}(x_3)$, 	for any $\eta>0$, there exists $\delta_0=\delta_0(n,p,\sigma,\Lambda,\eta)$ such that the following holds. If $\Omega$ is $(\delta_0,R_0)-$ Reifenberg flat domain, there exists a function $V_k\in W^{1,\infty}(B_{10r}(x_3))$  such that 
	  \begin{align}
	  ||\nabla V_k||_{L^\infty(B_{10r}(x_3))}\leq C \left[\mathbf{T}(\mu_k,B_{100r}(x_3))\right]^{\frac{1}{p-1}}+C (\fint_{B_{100r}(x_3)}|\nabla u_k|^{\gamma_0})^{1/\gamma_0}
	  \end{align}
	  \begin{align}\nonumber &
	  (\fint_{B_{10r}(x_3)}|\nabla (u_k-V_k)|^{\gamma_0}dx)^{\frac{1}{\gamma_0}}\\&~~~~~~~~\leq C_\eta \left[\mathbf{T}(\mu_k,B_{100r}(x_3))\right]^{\frac{1}{p-1}}+C(([A]_{R_0})^{\kappa} +\eta)(\fint_{B_{100r}(x_3)}|\nabla u_k|^{\gamma_0})^{1/\gamma_0}
	  \end{align}
	  for some  $\kappa\in (0,1)$, where  
	  \begin{equation*}
	  \mathbf{T}(\mu,B_{100r}(x_3)):=\left\{ \begin{array}{l}
	  \frac{|\mu|(B_{100r}(x_3))}{r^{n-1}} ~~~~~~~~~\text{if}~~\frac{3n-2}{2n-1}<p\leq 2-\frac{1}{n},\\ 
	  \left[\frac{\int_{B_{100r}(x_3)}|\mu|^\sigma}{r^{n-\sigma}}\right]^{\frac{1}{\sigma }}~~\text{ if }~1<p\leq \frac{3n-2}{2n-1}. \\ 
	  \end{array} \right.
	  \end{equation*}    
	 Since $\left[M(|\nabla u|^{\gamma_0})(x_1)\right]^{1/\gamma_0}\leq \lambda$ and  $[\mathbf{K}_1(\mu)(x_2)]^{\frac{1}{p-1}}\le \delta_2 \lambda$  with $x_1,x_2\in B_r(x)$ and \eqref{5hh2310138}, \eqref{5hh2310139} and  and definitions of $u_k,\mu_k$,  we get 
	 \begin{align*}
	 \mathop {\limsup }\limits_{k \to \infty } ||\nabla V_k||_{L^\infty(B_{2r}(x))}&\leq C \left[\mathbf{T}(\mu,\overline{B_{100r}(x_3)})\right]^{\frac{1}{p-1}}+C (\fint_{B_{100r}(x_3)}|\nabla u|^{\gamma_0})^{1/\gamma_0}
	 \\& \leq C \left[\mathbf{T}(\mu,B_{109r}(x_2))\right]^{\frac{1}{p-1}}+C (\fint_{B_{109r}(x_1)}|\nabla u|^{\gamma_0})^{1/\gamma_0}
	 \\&\leq C\left([\mathbf{K}_1(\mu)(x_2)]^{\frac{1}{p-1}}+\left[M(|\nabla u|^{\gamma_0})(x_1)\right]^{1/\gamma_0}\right)
	 \\&\leq C\lambda,
	 \end{align*}
	 and 
	 \begin{align*}
	 &\mathop {\limsup }\limits_{k \to \infty }(\fint_{B_{2r}(x)}|\nabla (u_k-V_k)|^{\gamma_0}dx)^{\frac{1}{\gamma_0}}\\&~~~~~~~~\leq C_\eta [\mathbf{K}_1(\mu)(x_2)]^{\frac{1}{p-1}}+C(([A]_{R_0})^{\kappa} +\eta)\left[M(|\nabla u|^{\gamma_0})(x_1)\right]^{1/\gamma_0}
	 \\&~~~~~~~~\leq C\left( C_\eta \delta_2+\delta_1^{\kappa}+\eta\right)\lambda.
	 \end{align*}
	 Here we used $[A]_{R_0}\leq \delta_1$ in the last inequality.\\
	 So, we can assume that 
	 \begin{align}\label{5hh23101310}
	 ||\nabla V_k||_{L^\infty(B_{2r}(x))}\leq C\lambda,
	 \end{align}
	 and 
	 \begin{align}\label{5hh23101311}
	 (\fint_{B_{2r}(x)}|\nabla (u_k-V_k)|^{\gamma_0}dx)^{\frac{1}{\gamma_0}}\leq C\left( C_\eta \delta_2+\delta_1^{\kappa}+\eta\right)\lambda.
	 \end{align}
	 for all $k\geq k_0$.\\
	  As above we also have  for $k\geq k_0$  \begin{align*}
	  |E_{\lambda,\delta_2}\cap B_r(x)|&\leq   |\{M\left(\chi_{B_{2r}(x)}|\nabla (u_k-v_k)|^{\gamma_0}\right)^{\frac{1}{\gamma_0}}>\Lambda_0\lambda/9\}\cap B_r(x)|
	  \\&+ |\{M\left(\chi_{B_{2r}(x)}|\nabla (u-u_k)|^{\gamma_0}\right)^{\frac{1}{\gamma_0}}>\Lambda_0\lambda/9\}\cap B_r(x)|.
	  \end{align*}                      
	  for some constant $\Lambda_0$ depending only on $n,p,\sigma,\Lambda$. 
	 Therefore, we deduce from \eqref{5hh23101310} and \eqref{5hh23101311} that 
	 \begin{align*}
	 |E_{\lambda,\delta_2}\cap B_r(x)|&\leq  \frac{C}{\lambda^{\gamma_0}}    \left(\int_{B_{2r}(x)}  |\nabla (u_k-v_k)|^{\gamma_0}+ \int_{B_{2r}(x)}  |\nabla (u-u_k)|^{\gamma_0} \right)    
	 \\&\leq \frac{C}{\lambda^{\gamma_0}}    \left(\left(C_{\eta}\delta_2+\delta_1^{\kappa}+\eta\right)^{\gamma_0}\lambda^{\gamma_0}r^n+ \int_{B_{2r}(x)}  |\nabla (u-u_k)|^{\gamma_0} \right)    
	 \end{align*} 
	 Letting $k\to\infty$  we get 
	\begin{equation*}
	|E_{\lambda,\delta_2}\cap B_r(x)|\leq C \left(C_{\eta}\delta_2+\delta_1^{\kappa}+\eta\right)^{\gamma_0}|B_r(x)|.
	\end{equation*}
	 Thus\begin{align*}
	 w(E_{\lambda,\delta_2}\cap B_r(x))&\leq c\left(\frac{|E_{\lambda,\delta_2}\cap B_r(x) |}{|B_r(x)|}\right)^\nu w(B_r(x))
	 \\&\leq  c\left(C_{\eta}\delta_2+\delta_1^{\kappa}+\eta\right)^{\gamma_0\nu} w(B_r(x))
	 \\&< \varepsilon w(B_r(x)).
	 \end{align*} 
	 where $\eta,\delta_1,\delta_2\leq C(n,p,\sigma,\gamma_0,\varepsilon,[w]_{\mathbf{A}_\infty})$.\\
	 Therefore, for all $x\in \Omega$ and $r\in (0,2r_0]$ and $\lambda>0$ if  
	 $$w(E_{\lambda,\delta_2}\cap B_r(x))\geq \varepsilon w(B_r(x))$$              
	 then            
	 $$ B_r(x)\cap \Omega\subset F_\lambda$$  
	 where $\delta_1=\delta_1(n,p,\sigma,\Lambda,\varepsilon,[w]_{A_\infty})\in (0,1)$ and $\delta_2=\delta_2(n,p,\sigma,\Lambda,\varepsilon,[w]_{A_\infty},T_0/R_0)\in (0,1)$. Applying Lemma \ref{5hhvitali2} we get the result.   The proof is complete.                         
\end{proof} \medskip\\\\
\begin{proof}[Proof of Theorem \ref{101120143}]By Theorem \ref{5hh23101312}, for any $\varepsilon>0,R_0>0$ one finds  $\delta=\delta(n,p,\sigma,\Lambda,\varepsilon,[w]_{A_\infty})\in (0,1/2)$ and $\delta_2=\delta_2(n,p,\sigma,\Lambda,\varepsilon,[w]_{A_\infty},diam(\Omega)/R_0)\in (0,1)$ and $\Lambda_0=\Lambda_0(n,p,\sigma,\Lambda)>0$ such that if $\Omega$ is  a $(\delta,R_0)$- Reifenberg flat domain and $[A]_{R_0}\le \delta$ then 
		\begin{equation}\label{1411201413}
	w(\{(M(|\nabla u|^{\gamma_0}))^{1/\gamma_0}>\Lambda_0\lambda, (\mathbf{K}_1(\mu))^{\frac{1}{p-1}}\le \delta_2\lambda \}\cap \Omega)\le C\varepsilon w(\{ (M(|\nabla u|^{\gamma_0}))^{1/\gamma_0}> \lambda\}\cap \Omega)
	\end{equation}
	for all $\lambda>0$, 
	where the constant $\gamma_0$ is in Lemma  \ref{111120149} and \ref{111120149'},  the constant $C$  depends only on $n,p,\sigma,\Lambda,[w]_{A_\infty},diam(\Omega)/R_0$.
	Thus, for $s<\infty,$
	\begin{align*}
&	||(M(|\nabla u|^{\gamma_0}))^{1/\gamma_0}||_{L^{q,s}_w(\Omega)}^s
	=q\Lambda_0^s\int_{0}^{\infty}\lambda^s\left(w(\{(M(|\nabla u|^{\gamma_0}))^{1/\gamma_0}>\Lambda\lambda \}\cap \Omega)\right)^{s/q}\frac{d\lambda}{\lambda} \\&~~~\leq 
	q\Lambda_0^s2^{s/q}(C\varepsilon)^{s/q}\int_{0}^{\infty}\lambda^s\left(w(\{(M(|\nabla u|^{\gamma_0}))^{1/\gamma_0}>\lambda \}\cap \Omega)\right)^{s/q}\frac{d\lambda}{\lambda} 
	\\&~~~~~+  q\Lambda_0^s2^{s/q}\int_{0}^{\infty}\lambda^s\left(w(\{(\mathbf{K}_1(\mu))^{\frac{1}{p-1}}>\delta_2\lambda \}\cap \Omega)\right)^{s/q}\frac{d\lambda}{\lambda}
	\\&~~~ = \Lambda_0^s2^{s/q}(C\varepsilon)^{s/q}||(M(|\nabla u|^{\gamma_0}))^{1/\gamma_0}||_{L^{q,s}_w(\Omega)}^s+\Lambda_0^s2^{s/q}\delta_2^{-s}||\mathcal(\mathbf{K}_1(\mu))^{\frac{1}{p-1}}||_{L^{q,s}_w(\Omega)}^s.
	\end{align*}
	It implies
	\begin{align*}
&	||(M(|\nabla u|^{\gamma_0}))^{1/\gamma_0}||_{L^{q,s}_w(\Omega)}\\&~~\leq 2^{1/s}\Lambda_02^{1/q}(C\varepsilon)^{1/q}||(M(|\nabla u|^{\gamma_0}))^{1/\gamma_0}||_{L^{q,s}_w(\Omega)}+2^{1/s}\Lambda_0 2^{1/q}\delta_2^{-1}||(\mathbf{K}_1(\mu))^{\frac{1}{p-1}}||_{L^{q,s}_w(\Omega)}
	\end{align*}
	and this inequalities  is also true when $s=\infty$. \\
	We can choose $\varepsilon=\varepsilon(n,p,\sigma,\Lambda,s,q,C)>0$ such that   $2^{1/s}\Lambda2^{1/q}(B\varepsilon)^{1/q}\leq 1/2$, then we get the result. The proof is complete.
\end{proof}\medskip\\  
The following is an equi-integrable property of renormalized solutions of \eqref{5hh070120148} . 
\begin{corollary} Let $\mathcal{F}\subset L^1_w(\Omega)$ be a bounded and equi-integrable with $w\in \mathbf{A}_{\infty}$. Let  $(u_k)_k$ be a renormalized solution to \eqref{5hh070120148} with data $\mu=\mu_k \in \mathfrak{M}_b(\Omega)$. Assume that $\mu_k\in L^\sigma(\Omega)$ for $\sigma>\frac{n}{p(2n-1)-2(n-1)}$ if $1<p\leq \frac{3n-2}{2n-1}$ and 
	\begin{align*}
	\left[\mathbf{K}_1(\mu_k)\right]^{\frac{q}{p-1}}\subset \mathcal{F}
	\end{align*}
	for $q>0$.	Then,  we find  $\delta=\delta(n,p,\sigma,\Lambda, q, [w]_{\mathbf{A}_{\infty}})\in (0,1)$ such that if $\Omega$ is  $(\delta,R_0)$-Reifenberg flat domain $\Omega$ and $[A]_{R_0}\le \delta$ for some $R_0>0$, then $(|\nabla u_k|^q)_k$ is a bounded and equi-integrable in   $L^1_{w}(\Omega)$
\end{corollary}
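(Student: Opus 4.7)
The plan is to combine the quantitative gradient estimate from Theorem \ref{101120143} with the good-$\lambda$ inequality \eqref{1411201413} that drives its proof. First, specializing Theorem \ref{101120143} to $s=q$ (so $L^{q,s}_w=L^q_w$) yields
\[
\int_\Omega|\nabla u_k|^q w\,dx\le C\int_\Omega[\mathbf{K}_1(\mu_k)]^{q/(p-1)}w\,dx,
\]
and the right-hand side is uniformly bounded because $\{[\mathbf{K}_1(\mu_k)]^{q/(p-1)}\}\subset\mathcal{F}$ and $\mathcal{F}$ is bounded in $L^1_w(\Omega)$; this handles the boundedness half of the conclusion.

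For equi-integrability, I would upgrade the argument via tail control of the maximal function. Set $g_k=(M(|\nabla u_k|^{\gamma_0}))^{1/\gamma_0}$ and $h_k=[\mathbf{K}_1(\mu_k)]^{1/(p-1)}$, so $|\nabla u_k|\le g_k$ a.e.\ by Lebesgue differentiation. Starting from \eqref{1411201413} (with $\varepsilon$ still free), the substitution $\lambda\mapsto\lambda/\Lambda_0$ gives, for every $\lambda>0$,
\[
w(\{g_k>\lambda\}\cap\Omega)\le C\varepsilon\,w(\{g_k>\lambda/\Lambda_0\}\cap\Omega)+w(\{h_k>\delta_2\lambda/\Lambda_0\}\cap\Omega).
\]
Multiplying by $q\lambda^{q-1}$, integrating from $T$ to $\infty$, and changing variables yields
\[
I_k(T)\le C\varepsilon\Lambda_0^q\,I_k(T/\Lambda_0)+(\Lambda_0/\delta_2)^q\,J_k(T\delta_2/\Lambda_0),
\]
where $I_k(T)=q\int_T^\infty\lambda^{q-1}w(\{g_k>\lambda\}\cap\Omega)\,d\lambda$ and $J_k$ is the analogue for $h_k$. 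Fixing $\varepsilon$ so that $C\varepsilon\Lambda_0^q\le 1/2$ and iterating $n$ times bounds $I_k(T)$ by $2^{-n}I_k(0)$ plus a geometric sum of the $J_k$ evaluated at arguments that shrink with $j$. Since $I_k(0)$ is uniformly bounded by Step 1 and $\{h_k^q\}\subset\mathcal{F}$ is equi-integrable in $L^1_w$ (so $\sup_k J_k(S)\to 0$ as $S\to\infty$), a two-stage choice — first $n$ to make $I_k(0)/2^n<\eta/2$, then $T$ so that the smallest of the $J_k$ arguments exceeds a threshold where $\sup_k J_k<\eta\delta_2^q/(4\Lambda_0^q)$ — yields $\sup_k I_k(T)\to 0$ as $T\to\infty$.

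The conclusion follows from the standard equi-integrability characterization: for any measurable $E\subset\Omega$ and any $T>0$,
\[
\int_E|\nabla u_k|^q w\,dx\le\int_E g_k^q w\,dx\le T^q w(E)+I_k(T),
\]
the last step by layer-cake. Taking $T$ first so that $I_k(T)<\eta$ uniformly in $k$, and then $w(E)<\eta/T^q$, makes both terms uniformly small. The main obstacle I anticipate is precisely the iteration above: the good-$\lambda$ inequality relates $I_k(T)$ to $I_k(T/\Lambda_0)$ at a \emph{smaller} argument, so there is no direct absorption, and the argument only succeeds because the iteration count $n$ can be fixed a priori from the prescribed tolerance and the common $L^1_w$-bound on $\mathcal{F}$, after which equi-integrability of $\mathcal{F}$ supplies the uniform $J_k$-tail decay needed to finish.
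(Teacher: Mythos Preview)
Your argument is correct but follows a different route from the paper's. The paper invokes the de la Vall\'ee Poussin criterion: since $\mathcal{F}$ is bounded and equi-integrable in $L^1_w(\Omega)$, there is a nondecreasing doubling function $\phi$ with $\phi(t)\to\infty$ such that $\int_0^\infty \phi(t)\,w(\{[\mathbf{K}_1(\mu_k)]^{q/(p-1)}>t\})\,dt\le 1$ for every $k$. Multiplying the good-$\lambda$ inequality \eqref{es19} by $\phi$ and integrating over the \emph{whole} half-line $(0,\infty)$ makes the scale shift $\lambda\mapsto\lambda/\Lambda_0^q$ invisible after a change of variables (the doubling of $\phi$ absorbs the constant), so a single absorption yields $\int_0^\infty \phi(t)\,w(\{g_k^q>t\})\,dt\le C$ uniformly in $k$; by de la Vall\'ee Poussin again this is exactly equi-integrability of $(g_k^q)$ and hence of $(|\nabla u_k|^q)$.

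Your approach via the tail integrals $I_k(T)$ is more hands-on: because the lower limit $T$ is visible, the good-$\lambda$ recursion relates $I_k(T)$ to $I_k(T/\Lambda_0)$ and cannot self-absorb, so you compensate with a finite iteration whose depth is fixed from the tolerance and the uniform bound on $I_k(0)$. This works, and has the virtue of avoiding de la Vall\'ee Poussin entirely. One imprecision worth fixing: $I_k(0)=\int_\Omega g_k^q\,w\,dx$ is the $L^q_w$-norm of the \emph{maximal} function $g_k=(M(|\nabla u_k|^{\gamma_0}))^{1/\gamma_0}$, not of $|\nabla u_k|$, so your ``Step 1'' as stated does not bound it; what you actually need is the stronger inequality established inside the proof of Theorem~\ref{101120143}, where the absorption is performed on $\|g_k\|_{L^q_w}$ itself. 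With that correction your two-stage choice of $n$ then $T$ goes through as written.
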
 
\begin{proof} Since  $\mathcal{F}$ is a bounded and equi-integrable in $ L^1_w(\Omega)$, by \cite[Proposition 1.27]{luigi} we can find a nondecreasing function $\phi: [0,\infty)\to [0,\infty)$ such that  $\phi(t)\to\infty$ as  $t\to \infty$
	\begin{align*}
	\mathcal{F}\subset \left\{f: \int_{0}^{\infty}\phi(t)w(\{x:|f|>t\}) dt\leq 1\right\}
	\end{align*}
	We can assume that $\phi(2t)\leq 2\phi(t)$ for all $t>1$.\\Thus, 
	\begin{align*}
	\int_{0}^{\infty}\phi(t)w(\{x:	\left[\mathbf{K}_1(\mu_k)\right]^{\frac{q}{p-1}}>t\}) dt\leq 1.
	\end{align*}
	By Theorem \ref{5hh23101312}, for any $\varepsilon>0,R_0>0$ one finds  $\delta=\delta(n,p,\sigma,\Lambda,\varepsilon,[w]_{A_\infty})\in (0,1/2)$ and $\delta_2=\delta_2(n,p,\sigma,\Lambda,\varepsilon,[w]_{A_\infty},diam(\Omega)/R_0)\in (0,1)$ and $\Lambda_0=\Lambda_0(n,p,\sigma,\gamma_0,\Lambda)>0$ such that if $\Omega$ is  a $(\delta,R_0)$- Reifenberg flat domain and $[A]_{R_0}\le \delta$ then 
	\begin{equation}\label{es19}
	w(\{(M(|\nabla u_k|^{\gamma_0}))^{1/\gamma_0}>\Lambda_0\lambda, (\mathbf{K}_1(\mu_k))^{\frac{1}{p-1}}\le \delta_2\lambda \}\cap \Omega)\le C\varepsilon w(\{ (M(|\nabla u_k|^{\gamma_0}))^{1/\gamma_0}> \lambda\}\cap \Omega)
	\end{equation}
	for all $\lambda>0$, 
	where the constant $\gamma_0$ is in Lemma  \ref{111120149} and \ref{111120149'},  the constant $C$  depends only on $n,p,\sigma,\gamma_0,\Lambda,\varepsilon,[w]_{A_\infty},diam(\Omega)/R_0$. 
Thus, 	
\begin{align*}
	&w(\{(M(|\nabla u_k|^{\gamma_0}))^{q/\gamma_0}>t\}\cap \Omega)\\&~~~\le 	w(\{ (\mathbf{K}_1(\mu_k))^{\frac{q}{p-1}}> \left(\frac{\delta_2}{\Lambda_0}\right)^qt \}\cap \Omega)+ C\varepsilon w(\{ (M(|\nabla u_k|^{\gamma_0}))^{q/\gamma_0}> t/\Lambda_0^q\}\cap \Omega)
\end{align*}
for all $t>0$. Therefore, 
	\begin{align*}
&	\int_{0}^{\infty}\phi(t)w(\{x:	(M(|\nabla u_k|^{\gamma_0}))^{\frac{q}{\gamma_0}}>t\}) dt\\&~\leq C\varepsilon \int_{0}^{\infty}\phi(t)w(\{x:	(M(|\nabla u_k|^{\gamma_0}))^{\frac{q}{\gamma_0}}>t/\Lambda_0^q\}) dt \\&~~~~+   \int_{0}^{\infty}\phi(t)w(\{ (\mathbf{K}_1(\mu_k))^{\frac{q}{p-1}}> \left(\frac{\delta_2}{\Lambda_0}\right)^qt \}\cap \Omega)dt\\&\leq C\Lambda_0^{3q}\varepsilon \int_{0}^{\infty}\phi(t)w(\{x:	(M(|\nabla u_k|^{\gamma_0}))^{\frac{q}{\gamma_0}}>t/\Lambda_0^q\}) dt \\&~~~+   C_{\delta_2}\int_{0}^{\infty}\phi(t)w(\{ (\mathbf{K}_1(\mu_k))^{\frac{q}{p-1}}> t \}\cap \Omega)dt
	\end{align*}
which implies 
\begin{align*}
\int_{0}^{\infty}\phi(t)w(\{x:	(M(|\nabla u_k|^{\gamma_0}))^{\frac{q}{\gamma_0}}>t\}) dt&\leq C \int_{0}^{\infty}\phi(t)w(\{ (\mathbf{K}_1(\mu_k))^{\frac{q}{p-1}}> t \}\cap \Omega)dt\\&\leq C
\end{align*}
for $\varepsilon>0$ small enough, ( it does not depend on $\phi$). Hence, 
\begin{align*}
\int_{0}^{\infty}\phi(t)w(\{x:	|\nabla u_k|^{q}>t\}) dt\leq C
\end{align*}
and 
$(|\nabla u_k|^q)_k$ is a bounded and equi-integrable in   $L^1_{w}(\Omega)$. The proof is complete. 
\end{proof}

\end{document}